\newcommand{\lr}[1]{\langle #1 \rangle}
\newcommand{\dom}{\text{dom}}
\newcommand{\cmark}{\ding{51}}
\newcommand{\xmark}{\ding{55}}
\theoremstyle{definition}
\newtheorem{theorem}{Theorem}
\newtheorem*{claim}{Claim}
\newtheorem{definition}[theorem]{Definition}
\newtheorem{lemma}[theorem]{Lemma}
\newtheorem{fact}[theorem]{Fact}
\newtheorem{example}[theorem]{Example}
\newtheorem{question}[theorem]{Question}
\newtheorem{corollary}[theorem]{Corollary}
\newcounter{saveenumi}
\title{Proper and Improper Variants of Mathias and Silver Forcing}
\author{Liu Shixiao}
\date{}
\begin{document}
\begin{spacing}{1.2}
    
\maketitle

\begin{abstract}
In this paper, we answer several questions in \cite{LMS-R} regarding density variants of Mathias and Silver forcing. These questions include whether each of the forcings is proper, adds dominating reals, or adds Cohen reals. We also generalize one of the proofs to Mathias forcings parametrized by lower semi-continuous submeasures satisfying certain properties.
\end{abstract}
\section{Introduction}

The aim of this paper is to establish several new results on variants of Mathias and Silver forcing introduced by Laguzzi, Mildenberger, and Stuber-Rousselle in \cite{LMS-R}.
These variants of Mathias and Silver forcing differ from previously considered ones as they are parametrized neither by filters nor by co-ideals, but by families of sets that satisfy certain conditions on their upper or lower density.

One of the reasons for analyzing such variants of Mathias and Silver forcing is to gain a better understanding of cardinals $\text{cov}^*(\mathcal{Z}_{0})$ and $\text{non}^*({\mathcal{Z}}_{0})$, which are cardinal characteristics associated to the ideal ${\mathcal{Z}}_{0}$ of sets of asymptotic density $0$.
Recall that:
\begin{definition}
Let $A \subseteq \omega$. The upper density and lower density of $A$ are defined by
 \begin{align*}
  &{d}^{-}(A) = \displaystyle\liminf_{n \rightarrow \infty}{\frac{\vert A \cap n \vert}{n}}.\\
  &{d}^{+}(A) = \displaystyle\limsup_{n \rightarrow \infty}{\frac{\vert A \cap n \vert}{n}}.
 \end{align*}
In the case of ${d}^{-}(A) = {d}^{+}(A)=x$, we also say the density of $A$ is $x$ and write $d(A) = x$. And we define
\[\mathcal{Z}_0 = \{ A \subseteq \omega: d(A) = 0 \}\]
\end{definition}

It is well-known (see \cite{ilijasbook} or \cite{MR1708146}) that ${\mathcal{Z}}_{0}$ is an analytic P-ideal. Also recall that:

\begin{definition}
A function $\varphi: \mathcal{P}(\omega) \rightarrow [0, \infty]$ is called a \emph{submeasure on $\omega$} if
\begin{enumerate}
\item $\varphi(\emptyset) = 0$;
\item $\varphi(X) \leq \varphi(Y)$, for all $X \subseteq Y \subseteq \omega$;
\item $\varphi(X \cup Y) \leq \varphi(X) + \varphi(Y)$, for all $X, Y \in \mathcal{P}(\omega)$;
\item $\varphi(\{n\}) < \infty$, for all $n \in \omega$.
\end{enumerate}

We say a submeasure $\varphi$ on $\omega$ is \emph{lower semi-continuous}, if $\varphi(X) = \lim\limits_{n \rightarrow \infty}{\varphi(X \cap n)}$ for every $X \in \mathcal{P}(\omega)$.
For a lower semi-continuous submeasure $\varphi$ on $\omega$, we define
\[\text{Exh}(\varphi) = \{ X \subseteq \omega: \lim\limits_{m \rightarrow \infty}{\varphi(X \setminus m)} = 0 \}\]
\end{definition}
It is not hard to see that $\text{Exh}(\varphi)$ is an ${F}_{\sigma\delta}$ P-ideal for every lower semi-continuous submeasure $\varphi$ on $\omega$. Indeed, analytic P-ideals are always of this form; by a theorem of Solecki~\cite{MR1708146}, every analytic P-ideal $\mathcal{I}$ is ${F}_{\sigma\delta}$ and there is a lower semi-continuous submeasure $\varphi$ on $\omega$ such that $\mathcal{I} = \text{Exh}(\varphi)$. Cardinal invariants associated with such analytic ideals as well as properties of their quotients have been well-studied. For example, in \cite{ilijasbook}, Farah studied gaps in quotients of the form $\mathcal{P}(\omega) \slash \mathcal{I}$, where $\mathcal{I}$ is an analytic ideal, and in \cite{MR2254542} he proved that $\mathcal{P}(\omega)~\slash~{\mathcal{Z}}_{0}$ is forcing equivalent to $( \mathcal{P}(\omega) \slash \text{FIN} ) * \mathcal{B}$, where $\mathcal{B}$ is a specific measure algebra.
Brendle and Shelah~\cite{MR1686797} defined and investigated certain cardinal invariants for analytic ideals.
In \cite{MR2319159}, Hern\'{a}ndez-Hern\'{a}ndez and Hru\v{s}\'{a}k focused on four cardinal invariants associated to any tall analytic P-ideal, establishing various connections with the classical cardinal invariants.
\begin{definition}
A non-principal ideal $\mathcal{I}$ on $\omega$ is said to be \emph{tall} if for all $A \in [\omega]^{{\aleph}_{0}}$ there is $B \in [A]^{{\aleph}_{0}}$ such that $B \in \mathcal{I}$.
For a non-principal tall ideal $\mathcal{I}$ on $\omega$, define the cardinals
\begin{align*}
  {\text{add}}^{*}(\mathcal{I}) &= \min\{ \vert \mathcal{F} \vert: \mathcal{F} \subseteq \mathcal{I} \wedge \forall X \in \mathcal{I} \,\exists A \in \mathcal{F}\,(A \, {\not\subseteq}^{*} \, X )\}\\
  {\text{cov}}^{*}(\mathcal{I}) &= \min\{ \vert \mathcal{F} \vert: \mathcal{F} \subseteq \mathcal{I} \wedge \forall X \in [\omega]^{{\aleph}_{0}} \exists A \in \mathcal{F}\,( \vert A \cap X \vert = {\aleph}_{0} )\}\\
  {\text{cof}}^{*}(\mathcal{I}) &= \min\{ \vert \mathcal{F} \vert: \mathcal{F} \subseteq \mathcal{I} \wedge \forall X \in \mathcal{I} \,\exists A \in \mathcal{F}\,(X \,{\subseteq}^{*} \, A )\}\\
  {\text{non}}^{*}(\mathcal{I}) &= \min\{ \vert \mathcal{F} \vert: \mathcal{F} \subseteq [\omega]^{{\aleph}_{0}} \wedge \forall X \in \mathcal{I}\, \exists A \in \mathcal{F}\,( \vert A \cap X \vert < {\aleph}_{0} )\}.
\end{align*}   
\end{definition}
The cardinals ${\text{cov}}^{*}(\mathcal{I})$ and ${\text{non}}^{*}(\mathcal{I})$ are duals.
Similarly, ${\text{add}}^{*}(\mathcal{I})$ and ${\text{cof}}^{*}(\mathcal{I})$ are duals. See \cite{MR2768685} for more about duality of cardinal invariants. For definitions of the classical cardinal invariants, such as $\mathfrak{b},\mathfrak{d},\mathfrak{s},\mathfrak{r}$, the reader can refer to Blass~\cite{MR2768685} or to Bartoszy\'{n}ski and Judah~\cite{MR1350295}. In \cite{MR2319159}, Hern\'{a}ndez-Hern\'{a}ndez and Hru\v{s}\'{a}k pointed {out} that ${\text{cov}}^{*}({\mathcal{Z}}_{0})$ and ${\text{non}}^{*}({\mathcal{Z}}_{0})$ play a basic role in the general theory these invariants.
They established (see Theorems 3.10 and 3.12 of \cite{MR2319159}) that $\min\{\text{cov}(\mathcal{N}), \mathfrak{b}\} \leq {\text{cov}}^{*}({\mathcal{Z}}_{0}) \leq \text{non}(\mathcal{M})$ , $\text{cov}(\mathcal{M}) \leq {\text{non}}^{*}({\mathcal{Z}}_{0}) \leq \max\{\mathfrak{d}, \text{non}(\mathcal{N})\}$, ${\text{cov}}^{*}({\mathcal{Z}}_{0}) \leq \max\{\mathfrak{b}, \text{non}(\mathcal{N})\}$, and that ${\text{non}}^{*}({\mathcal{Z}}_{0}) \leq \min\{\mathfrak{d}, \text{cov}(\mathcal{N})\}$.
They raised the question of whether ${\text{cov}}^{*}({\mathcal{Z}}_{0}) \leq \mathfrak{d}$.
Raghavan and Shelah~\cite{MR3615051} answered their question by proving the dual inequalities ${\text{cov}}^{*}({\mathcal{Z}}_{0}) \leq \mathfrak{d}$ and $\mathfrak{b} \leq {\text{non}}^{*}({\mathcal{Z}}_{0})$ in ZFC.
Raghavan~\cite{MR4099835} further improved these results by showing in ZFC that $\min\{\mathfrak{d}, \mathfrak{r}\} \leq {\text{non}}^{*}({\mathcal{Z}}_{0})$ and that ${\text{cov}}^{*}({\mathcal{Z}}_{0}) \leq \max\{\mathfrak{b}, \mathfrak{s}(\mathfrak{pr})\}$.
Here, $\mathfrak{s}(\mathfrak{pr})$ is defined to be the minimal cardinality of a family $\mathcal{F}$ of partitions of $\omega$ into infinitely many pieces such that for every $X \in [\omega]^{{\aleph}_{0}}$, there is a partition in $\mathcal{F}$ such that all pieces have infinite intersection with $X$.
It is not known whether $\mathfrak{s}(\mathfrak{pr}) = \mathfrak{s}$.

The cardinal ${\text{cov}}^{*}(\mathcal{I})$ is closely related to the question of which forcing notions can diagonalize $\mathcal{I}$.
For any ideal $\mathcal{I} \in {V}$, a forcing $\mathbb{P} \in {V}$ is said to \emph{diagonalize} $\mathcal{I}$ if there is a $\mathbb{P}$-name $\dot{X}$ such that ${\Vdash}_{\mathbb{P}}{\dot{X} \in [\omega]^{{\aleph}_{0}}}$ and for each $I \in {V} \cap \mathcal{I}$, ${\Vdash}_{\mathbb{P}}{\vert \dot{X} \cap I \vert < {\aleph}_{0}}$.
Note that if $\mathbb{P}$ diagonalizes $\mathcal{I}$, then $\mathbb{P}$ tends to increase ${\text{cov}}^{*}(\mathcal{I})$.
Furthermore, if $\mathcal{I}$ is \emph{Kat{\v e}tov below} $\mathcal{J}$, which means that $\exists f \in {\omega}^{\omega} \forall I \in \mathcal{I}\,({f}^{-1}(I) \in \mathcal{J})$, then any $\mathbb{P}$ that diagonalizes $\mathcal{J}$ will also diagonalize $\mathcal{I}$.
The question of whether an ideal can be diagonalized without adding certain types of reals has a long history in set theory of the reals.
For instance, Laflamme \cite{MR1068126} showed that every ${F}_{\sigma}$ ideal can be diagonalized by an ${\omega}^{\omega}$-bounding forcing, and Canjar \cite{MR0969054} showed that certain ultrafilters can be diagonalized without adding dominating reals.
The result of \cite{MR3615051} shows that any proper forcing which diagonalizes ${\mathcal{Z}}_{0}$ must add an unbounded real, and it is proved in \cite{MR4099835} that any Suslin c.c.c.\@ forcing which diagonalizes ${\mathcal{Z}}_{0}$ must add a dominating real.
Raghavan \cite{MR4099835} also asked whether ${\mathcal{Z}}_{0}$ can be diagonalized by any proper forcing without adding dominating reals; this question still remains open.

Brendle, Guzm\'{a}n, Hru\v{s}\'{a}k and Raghavan \cite{2022arXiv220614936B} have recently shown that the ideal generated by a m.a.d.\@ family is always \emph{nearly} Kat{\v e}tov below ${\mathcal{Z}}_{0}$, which means that the reduction holds after removing countably many members of the m.a.d.\@ family. It is a long-standing open problem whether $\mathfrak{d} = {\aleph}_{1}$ implies $\mathfrak{a} = {\aleph}_{1}$.
In \cite{MR3129732}, Brendle and Raghavan raised the question of whether $\mathfrak{b} = \mathfrak{s} = {\aleph}_{1}$ already implies $\mathfrak{a} = {\aleph}_{1}$, and their question remains open as well.
The results of \cite{2022arXiv220614936B} show that an understanding of what forcing notions are capable of diagonalizing ${\mathcal{Z}}_{0}$ is useful for analyzing $\mathfrak{a}$.
By \cite{MR3129732}, for any collection of closed sets whose union is a m.a.d.\@ family, there is a forcing that does not add dominating reals and forces that the reinterpreted union is not maximal in the extension.

Laguzzi, Mildenberger, and Stuber-Rousselle~\cite{LMS-R} recently introduced new variants of Mathias and Silver forcing parametrized by upper or lower density.
Their variants of Mathias forcing diagonalize ${\mathcal{Z}}_{0}$.
\begin{definition}
The Mathias forcing $\mathbb{M}$ consists of tuples $(s,A)$, where {$s\subset \mathbb{N}$ is finite, $A\subseteq \mathbb{N}$ is infinite}, and $\max s <\min A$. For two tuples, $(s_1,A_1)\leq (s_2,A_2)$ iff
\begin{itemize}
    \item $s_1\supseteq s_2$
    \item $A_1\subseteq A_2$
    \item If $n\in s_1\setminus s_2$, then $n\in A_2$
\end{itemize}
The density variants of Mathias forcing are denoted with superscripts $^-$ $^+$ and subscripts $_{\geq \varepsilon}$ $_{> 0}$, where the density requirements are imposed on the infinite set $A$. For example, $\mathbb{M}^-_{\geq \varepsilon}$ requires $d^-(A)\geq \varepsilon$ for all conditions $(s,A)$, and $\mathbb{M}^+_{>0}$ requires $d^+(A)>0$.
\end{definition}
\begin{definition}
The Silver forcing $\mathbb{V}$ consists of partial functions $f:\omega\rightarrow 2$ such that $\omega\setminus\dom(f)$ is infinite. For two conditions, $f_1\leq f_2$ iff $f_1\supseteq f_2$. The density variants of Silver forcing are denoted with superscripts $^-$ $^+$ and subscripts $_{\geq \varepsilon}$ $_{> 0}$, where the density requirements are imposed on $\omega\setminus\dom(f)$. For instance, $\mathbb{V}^-_{\geq \varepsilon}$ requires $d^-(\omega\setminus\dom(f))\geq \varepsilon$ for all conditions $f$, and $\mathbb{V}^+_{>0}$ requires $d^+(\omega\setminus\dom(f))>0$.
\end{definition}
Properties of the classical Mathias and Silver models are well-known. See \cite{MR1350295}.
Nevertheless new properties of these models are still being discovered, for instance \cite{MR3990958} showed there are no P-points in the Silver model.  While many variants of Mathias and Silver forcing have been considered previously, for example \cite{MR0297560} and \cite{MR3142391}, the ones considered by Laguzzi, Mildenberger, and Stuber-Rousselle in \cite{LMS-R} are quite different because they are determined by neither a filter nor a co-ideal. Their results are summarized in the table below. {Note that the paper also claims to have proven the properness of $\mathbb{M}_{\geq\epsilon}^{-}$, but we will soon show an error in their proof.}

\begin{center}
\begin{tabular}{|c|c|c|c|c|c|c|c|c|}
\hline
~ & $\mathbb{M}^-_{>0}$ & $\mathbb{M}^+_{>0}$ & $\mathbb{M}^-_{\geq\epsilon}$ & $\mathbb{M}^+_{\geq\epsilon}$ & $\mathbb{V}^-_{>0}$ & $\mathbb{V}^+_{>0}$ & $\mathbb{V}^-_{\geq\epsilon}$ & $\mathbb{V}^+_{\geq\epsilon}$ \\
\hline
proper & ~ & \cmark & ~ & \cmark & \xmark & ~ & ~ & \cmark\\
\hline
dominating & ~ & \cmark & ~ & \cmark & \cmark & ~ & ~ & \xmark\\
\hline
Cohen & \cmark & \cmark & \cmark & \cmark & \cmark & ~ & ~ & \xmark\\
\hline
\end{tabular}
\end{center}

Several questions were left open in \cite{LMS-R}, indicated by the blank boxes in the table above. In section $2$ of this paper, we study the Mathias forcings ${\mathbb{M}}^{-}_{\geq \varepsilon}$ and ${\mathbb{M}}^{-}_{>0}$. In particular, Question 3.13 of \cite{LMS-R} asks whether ${\mathbb{M}}^{-}_{\geq\varepsilon}$ adds dominating reals.
They pointed out that since ${\mathbb{M}}^{-}_{\geq\varepsilon}$ diagonalizes ${\mathcal{Z}}_{0}$, a negative answer to their question would give a positive answer to Question 38 of \cite{MR4099835}.
While a general criterion for whether a Mathias-Prikry type forcing adds a dominating real is given in \cite{MR3142391}, this is not applicable to ${\mathbb{M}}^{-}_{\geq\varepsilon}$ because it is not parametrized by a filter.
We will give a positive answer to Question 3.13 of \cite{LMS-R} by showing that ${\mathbb{M}}^{-}_{\geq\varepsilon}$ always adds a dominating real. In addition to this, we will introduce a generalization of ${\mathbb{M}}^{-}_{\geq \varepsilon}$ for a sequence of lower semi-continuous submeasures. In section $3$ we study the Silver forcings ${\mathbb{V}}^{-}_{\geq \varepsilon}$ and ${\mathbb{V}}^{+}_{>0}$. We will show that they both collapse {the} continuum to $\omega$. With all these results, we complete the above table by the end of this paper. Some of the results will rely on the Darboux property of upper and lower density, which can be found in \cite{MR3597402}.

\section{Mathias Forcings with Density}
In this section we deal with properties of two Mathias forcings that remain unproven in \cite{LMS-R}. Let's first show a counterexample to Corollary 3.5 of \cite{LMS-R}. {The corollary claims that, for any set $A$ with $d^-(A)=\epsilon$, the set $\mathcal{F}(A)=\{B\subseteq A:\, d^-(B)=\epsilon\}$ is a filter.} This erroneous corollary was used in the paper to prove the properness of $\mathbb{M}_{\geq\varepsilon}^-$.

\begin{example}
Divide $\mathbb{N}$ into intervals $[a_i,a_{i+1})$ such that $a_0=0$ and $a_{i+1}=2^{a_i}$. We take $A, B_1, B_2$ such that
\[A\cap [a_i,a_{i+1})=
\begin{cases}
    [a_i,a_{i+1})&\text{if $i$ is even}\\
    \{2n:n\in\mathbb{N}\}\cap [a_i,a_{i+1})&\text{if $i$ is odd}
\end{cases}\]
\[B_1\cap [a_i,a_{i+1})=
\begin{cases}
    \{2n:n\in\mathbb{N}\}\cap[a_i,a_{i+1})&\text{if $i$ is even}\\
    \{2n:n\in\mathbb{N}\}\cap [a_i,a_{i+1})&\text{if $i$ is odd}
\end{cases}\]
\[B_2\cap [a_i,a_{i+1})=
\begin{cases}
    \{2n+1:n\in\mathbb{N}\}\cap[a_i,a_{i+1})&\text{if $i$ is even}\\
    \{2n:n\in\mathbb{N}\}\cap [a_i,a_{i+1})&\text{if $i$ is odd}
\end{cases}\]
Then we have $d^-(A)={d^-(B_1)}=d^-(B_2)=1/2$ and $B_1,B_2\subseteq A$, but $d^-(B_1\cap B_2)=0$. Hence $\{B\subseteq A:\, d^-(B)=1/2\}$ is not a filter.

Furthermore, consider the equivalence relation $=^*$ on $2^\omega$, namely $f=^* g$ iff they are equal modulo a finite set. Pick one representative $f_\alpha$ from each equivalence class. The set $\{f_\alpha: \alpha\in I\}$ has size {$2^{\aleph_0}$}. For each $f_\alpha$, we take $B_{\alpha}\subseteq \mathbb{N}$ such that
\[B_{\alpha}\cap [a_i,a_{i+1})=
\begin{cases}
    \{2n+1:n\in\mathbb{N}\}\cap [a_i,a_{i+1})&\text{if $i$ is even and }f_\alpha(i/2)=1\\
    \{2n:n\in\mathbb{N}\}\cap [a_i,a_{i+1})&\text{if $i$ is odd, or $i$ is even but }f_\alpha(i/2)=0
\end{cases}\]
By definition, each $B_{\alpha}$ has density $1/2$ and $B_{\alpha}\subseteq A$. Moreover, for $f_\alpha\neq f_\beta$, the intersection $B_{\alpha}\cap B_{\beta}$ will be empty on infinitely many intervals $[a_i,a_{i+1})$. Since the size of these intervals grow exponentially, we have that $d^-(B_{\alpha}\cap B_{\beta})=0$. Therefore, the set $\{(\emptyset, B_{\alpha}):\,\alpha\in I\}$ is {an antichain in $\mathbb{M}_{\geq 1/2}^-$ of size $2^{\aleph_0}$} , and it lies below the condition $(\emptyset,A)$ with $d^-(A)=1/2$.
\end{example}

~

Since the original proof is based on an incorrect corollary, we need to give a new proof that $\mathbb{M}_{\geq\varepsilon}^-$ is proper. Indeed, we prove it for the generalized forcing notion.

\begin{definition}
For each $m\in\mathbb{N}$, let $\varphi_m: \mathcal{P}(\mathbb{N})\rightarrow [0,\infty]$ be a lower semi-continuous submeasure on $\mathbb{N}$, such that $\lim\limits_{m\rightarrow\infty}\varphi_m(\{n\})=0$ for all $n\in\mathbb{N}$. Then we can define the submeasure $\varphi^-(A):=\liminf\limits_{m\rightarrow\infty}\varphi_m(A)$ and the corresponding Mathias forcing
\[\mathbb{M}^{\varphi^-}_{\geq\varepsilon}:=\{(s,A):\max(s)<\min(A)\text{ and }\varphi^-(A)\geq\varepsilon\}\]
where the ordering is defined as usual.
\end{definition}

It's clear that, when we take $\varphi_m(A)=\dfrac{|A\cap m|}{m}$, the above definition gives us $\mathbb{M}^-_{\geq\varepsilon}$, so it indeed generalizes Mathias forcing with lower density. Other examples satisfying the conditions include weighted density $\varphi_m(A)=\dfrac{\sum\limits_{i\in A\cap m} f(i)}{\sum\limits_{i< m} f(i)}$, {where $f$ is the weight function} (see also Erd\"os-Ulam submeasure, for instance in \cite{ilijasbook}), and the filter of cofinite sets (see Example \ref{cofinite} below).

\begin{fact}
\label{star_equal}
If $A=^*B$, then $\varphi^-(A)=\varphi^-(B)$.
\end{fact}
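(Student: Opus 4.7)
The plan is to bound the quantity $|\varphi_m(A) - \varphi_m(B)|$ uniformly by the submeasures of the symmetric difference $F = A\triangle B$, which is finite by hypothesis, and then use the assumption $\lim_{m\to\infty}\varphi_m(\{n\}) = 0$ to push that bound to $0$ as $m \to \infty$. Passing to $\liminf$ then yields the claimed equality.

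First I would set $F = A \triangle B$, so $F$ is finite and $A \subseteq B \cup F$, $B \subseteq A \cup F$. Subadditivity of each $\varphi_m$ gives
\[
\varphi_m(A) \leq \varphi_m(B) + \varphi_m(F), \qquad \varphi_m(B) \leq \varphi_m(A) + \varphi_m(F),
\]
so $|\varphi_m(A) - \varphi_m(B)| \leq \varphi_m(F)$ for every $m$. Applying subadditivity once more to the finite set $F$ yields $\varphi_m(F) \leq \sum_{n \in F}\varphi_m(\{n\})$.

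By the standing hypothesis on the sequence $(\varphi_m)_{m\in\mathbb{N}}$, we have $\lim_{m\to\infty}\varphi_m(\{n\}) = 0$ for each $n \in \mathbb{N}$. Since $F$ is finite, the finite sum $\sum_{n\in F}\varphi_m(\{n\})$ also tends to $0$, so $\lim_{m\to\infty}\varphi_m(F) = 0$. Consequently $\varphi_m(A)$ and $\varphi_m(B)$ differ by a quantity tending to $0$, and taking $\liminf$ gives $\varphi^-(A) = \varphi^-(B)$.

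There is no real obstacle here; the only point worth flagging is that the hypothesis $\lim_m\varphi_m(\{n\}) = 0$ is genuinely used, because an individual submeasure $\varphi_m$ could assign large mass to a finite set, so one cannot avoid the limit in $m$. Lower semi-continuity of the individual $\varphi_m$ is not needed for this fact; only the submeasure axioms and the pointwise convergence $\varphi_m(\{n\})\to 0$ enter.
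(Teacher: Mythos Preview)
Your proof is correct and follows essentially the same approach as the paper: both arguments use subadditivity to bound the difference of $\varphi_m(A)$ and $\varphi_m(B)$ by the submeasure of a finite set, then invoke the hypothesis $\lim_m\varphi_m(\{n\})=0$ to conclude. The only cosmetic difference is that you work with the single finite set $F=A\triangle B$, whereas the paper writes $A\cup A_0=B\cup B_0$ with $A_0,B_0$ finite; your formulation is slightly cleaner and sidesteps any worry about the $\liminf$ of a sum.
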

\begin{proof}
Suppose $A\cup A_0=B\cup B_0$ where $A_0, B_0$ are both finite. Then by subadditivity we have $\lim\limits_{m\rightarrow\infty}\varphi_m(A_0)=\lim\limits_{m\rightarrow\infty}\varphi_m(B_0)=0$. Hence \[\varphi^-(A)=\liminf\limits_{m\rightarrow\infty}\varphi_m(A)\leq \liminf\limits_{m\rightarrow\infty}\varphi_m(B\cup B_0)\leq \liminf\limits_{m\rightarrow\infty}\varphi_m(B)+\liminf\limits_{m\rightarrow\infty}\varphi_m(B_0)=\varphi^-(B)\]
and vice versa.
\end{proof}

\begin{theorem}
$\mathbb{M}_{\geq\varepsilon}^{\varphi^-}$ is proper for any $\varepsilon\geq 0$.
\end{theorem}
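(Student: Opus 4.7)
\emph{Plan.} Fix a countable elementary submodel $M \prec H(\theta)$ containing $\mathbb{P} = \mathbb{M}_{\geq\varepsilon}^{\varphi^-}$, $(\varphi_m)_m$, $\varepsilon$, and $(s,A) \in M \cap \mathbb{P}$; the goal is to construct $(s, A^\ast) \leq (s, A)$ that is $(M, \mathbb{P})$-generic. The overall strategy is a Mathias-style fusion, but the key technical novelty relative to the classical argument is how we cope with the fact that the positivity family $\mathcal{F}_\varepsilon := \{B : \varphi^-(B) \geq \varepsilon\}$ is not closed under finite intersections (as the preceding counterexample establishes), which rules out the usual pseudo-intersection of $A$-parts. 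Instead, I exploit the closure of $\mathcal{F}_\varepsilon$ under \emph{supersets}---a direct consequence of monotonicity of each $\varphi_m$---to build $A^\ast$ as an increasing union of witnesses drawn from dense open sets in $M$.

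Enumerate the dense open subsets of $\mathbb{P}$ in $M$ as $\{D_n : n < \omega\}$ and, via bookkeeping, all pairs $(t, D_n)$ with $t \supseteq s$ finite, so that each pair is visited cofinally. Recursively build in $M$ a chain of finite committed sets $s = T_0 \subseteq T_1 \subseteq \cdots$ together with commitments $(t'_k, B_k) \in \mathbb{P} \cap M$: at stage $k$, if $t_k \subseteq T_k$, density of $D_{n_k}$ in $M$ and elementarity yield $(t'_k, B_k) \in D_{n_k} \cap M$ with $(t'_k, B_k) \leq (t_k, A \setminus (\max T_k + 1))$, and one updates $T_{k+1} := T_k \cup (t'_k \setminus t_k)$; otherwise skip. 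Finally set $A^\ast := (T_\infty \setminus s) \cup \bigcup_k B_k$, where $T_\infty = \bigcup_k T_k$ and the union ranges over the non-skipped stages.

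That $A^\ast \subseteq A$ is clear, and since $A^\ast \supseteq B_k$ for at least one committed $B_k$, the earlier-established monotonicity of $\varphi^-$ under supersets (as in Fact \ref{star_equal}) yields $\varphi^-(A^\ast) \geq \varphi^-(B_k) \geq \varepsilon$, so $(s, A^\ast) \in \mathbb{P}$. For the easy direction of $(M, \mathbb{P})$-genericity, fix $D \in M$ dense open and $(t, C) \leq (s, A^\ast)$; since $t \setminus s$ is a finite subset of $A^\ast$, it is eventually contained in $T_{k^\ast}$ for some $k^\ast$, and the next occurrence of $(t, D)$ in the bookkeeping produces $(t'_{k^\ast}, B_{k^\ast}) \in D \cap M$ with $t'_{k^\ast} \setminus t \subseteq A^\ast$ and $B_{k^\ast} \subseteq A^\ast$, so $(t'_{k^\ast}, B_{k^\ast}) \leq (t, A^\ast \setminus (\max t + 1))$.

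The hard part will be upgrading this to a compatibility argument for an \emph{arbitrary} $(t, C) \leq (s, A^\ast)$, where $C$ may strictly refine $A^\ast \setminus (\max t + 1)$: the chosen commitment need not be compatible with $(t, C)$, because $t'_{k^\ast} \setminus t$ may contain elements outside $C$ and $B_{k^\ast} \cap C$ need not lie in $\mathcal{F}_\varepsilon$. The plan is to resolve this by enlarging the bookkeeping to range over triples $(t, D_n, u)$ with $u \supseteq t$ a finite candidate extension, placing a commitment $(u, B_k) \in D_n \cap M$ for each, so that for every positive $C$ there is an available commitment with $u \setminus t \subseteq C$. The lower semi-continuity of each $\varphi_m$---which lets us truncate witnessing $\varphi_m$-mass to an initial segment without significant loss---together with the asymptotic smallness $\varphi_m(\{n\}) \to 0$ of singletons, will then furnish the density required to find a commitment whose $B_k$ and $C$ have positive common refinement in $\mathcal{F}_\varepsilon$, completing the verification.
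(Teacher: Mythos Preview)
Your architecture is inverted, and the ``hard part'' you flag is not a detail to be filled in but the entire content of the theorem. By taking $A^\ast$ to be the \emph{union} $\bigcup_k B_k$ of the committed side-conditions, you ensure $A^\ast \supseteq B_k$, which indeed gives $\varphi^-(A^\ast)\geq\varepsilon$ by monotonicity. But genericity requires the opposite containment: given an arbitrary $(t,C)\leq (s,A^\ast)$ and a dense $D\in M$, you must find a committed $(u,B_k)\in D\cap M$ that is \emph{compatible} with $(t,C)$, and compatibility forces $\varphi^-(B_k\cap C)\geq\varepsilon$. Since $C$ is an arbitrary positive subset of the union $\bigcup_k B_k$, nothing prevents $C$ from having $\varphi^-(C\cap B_k)<\varepsilon$ for every $k$; the paper's own Example (the $B_1,B_2\subseteq A$ with $d^-(B_1\cap B_2)=0$) shows exactly how this happens for lower density. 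Your proposed fix of enumerating triples $(t,D_n,u)$ only addresses the finite stem and does nothing about the infinite part; lower semi-continuity and smallness of singletons are statements about \emph{finite} perturbations and cannot force $B_k\cap C$ to be positive for a suitably chosen $C$.

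The paper's proof runs in the opposite direction: it builds a \emph{decreasing} sequence $A_0\supseteq A_1\supseteq\cdots$, at each stage shrinking to a witness $C_{i+1,k}$ deciding an ordinal name, and then takes $B$ to be a diagonal fusion with $B\cap[N_i,N_{i+1})=A_i\cap[N_i,N_{i+1})$. The crucial consequence is that $B\subseteq A_i$ for every $i$, hence $B\subseteq^* C_{i+1,k}$ for each commitment, so any $D\subseteq B$ automatically satisfies $D\subseteq^* C_{i+1,k}$ and compatibility is free. Lower semi-continuity of the $\varphi_m$ enters only to verify $\varphi^-(B)\geq\varepsilon$ for this fusion set, via a carefully chosen growth of the cut points $N_i$ relative to auxiliary witnesses $M_i$. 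In short, the correct construction makes the master condition sit \emph{below} all commitments (modulo finite), not above them; your union-based scheme cannot be salvaged without reversing this.

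A minor point: your citation of Fact~\ref{star_equal} for monotonicity under supersets is a misattribution (that fact is $=^*$-invariance); monotonicity is immediate from clause~(2) of the submeasure definition and needs no separate argument.
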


\begin{proof}
Take $\kappa$ large enough and $\mathcal{M}\prec H_\kappa$ countable with $\mathbb{M}_{\geq\varepsilon}^{\varphi^-}\in\mathcal{M}$. List all ordinal names $\dot{\alpha}_0,\dot{\alpha}_1,\dots$ in $\mathcal{M}$. Fix $(s,A_0)\in \mathbb{M}_{\geq\varepsilon}^{\varphi^-}\cap\mathcal{M}$ and we find an extension $(s,B)\leq (s,A_0)$ and {sets $\Gamma_n$ of ordinals in $\mathcal{M}$} such that $(s,B)\Vdash \dot{\alpha}_n\in \Gamma_n$ for all $n\in\mathbb{N}$.

{The construction is done by induction. In each stage $i+1$, we construct finite sets $B_{i+1}$ and $\Gamma_{n,i+1}$, and in the end we let $B=\bigcup_{i\geq 1} B_i$ and $\Gamma_n=\bigcup_{i\geq 1} \Gamma_{n,i}$. We also construct auxiliary sequences $\{A_{i}\}, \{N_{i}\},\{M_{i}\}$, where:
\begin{itemize}
    \item $A_0\supseteq A_1\supseteq A_2\supseteq\dots$ is a shrinking sequence of infinite subsets of $\mathbb{N}$;
    \item $0=N_0<N_1<N_2<\dots$ are the boundaries at each stage such that $A_i\cap N_{i}=A_{i+1}\cap N_i$;
    \item $0=M_0<M_1<M_2<\dots$ are witnesses of the limit $\varphi^-(A)=\liminf\limits_{m\rightarrow\infty}\varphi_m(A_{i+1})$ at each stage. Namely, $|\varphi_m(A_{i+1})-\varepsilon|$ is small enough for all $m\geq M_{i+1}$.
\end{itemize} Each stage $i+1$ will be a recursive construction on its own, where we construct the sequence $A_{i}=A_{i+1,0}\supseteq A_{i+1,1}\supseteq \dots\supseteq A_{i+1,l}=A_{i+1}$ such that $A_{i+1,k}\cap N_{i}=A_{i}\cap N_{i}$ for all $k$, and therefore $A_{i}\cap N_{i}=A_{i+1}\cap N_{i}$.}
\begin{itemize}
    \item Stage $0$:\\
    {$A_0$ is given by the condition $(s,A_0)$ we fix. Start with $M_0=N_0=0$.}
    \item Stage $i+1$:\\
    There are finitely many possible ways to extend $s$ with elements in $\bigcup_{1\leq r\leq i}B_r$ defined in previous stages, since each one of the set $B_r$ is finite. Enumerate all such extensions as $s_i^0,s_i^1,\dots,s_i^p$. {Start with $A_{i+1,0}=A_i$. Let $\{(n_k,j_k):\,0\leq k<(p+1)(i+2)\}$ enumerate all pairs $(n,j)$ such that $0\leq n\leq i+1$ and $0\leq j\leq p$. In each step $k$, ask (in $\mathcal{M}$) whether:}
    \[\exists C\subseteq (A_{i+1,k}\setminus N_i)\ \exists \beta\in\text{Ord}\ (s_i^{j_k}, C)\Vdash \dot{\alpha}_{n_k}=\check{\beta}\]
    \begin{enumerate}
        \item If the answer is ``no'', let $A_{i+1,k+1}=A_{i+1,k}$.
        \item If the answer is ``yes'', {choose a pair of witnesses $C_{i+1,k}$ and $\beta_{i+1,k}$ for $C$ and $\beta$, and let $A_{i+1,k+1}=(A_i\cap N_i)\cup C_{i+1,k}$. Note that $(s_i^{j_k}, C_{i+1,k})$ is a condition and therefore $\varphi^-(A_{i+1,k+1})\geq \varepsilon$.}
    \end{enumerate}
    In the end we let $A_{i+1}= A_{i+1,(p+1)(i+2)-1}$ and {$\Gamma_{n,i+1}=\{\beta_{i+1,k}:\,n_k=n\}$} for each $n$. Clearly $A_{i+1}\cap N_i=A_i\cap N_i$ and {each $\Gamma_{n,i+1}$ is a finite set of ordinals in $\mathcal{M}$}.\vspace{1.5mm}\\
    For the boundary at this stage, since $\varphi^{-}(A_{i+1})\geq \varepsilon$, we can find $M_{i+1}>M_i$ such that $\varphi_m(A_{i+1})\geq \varepsilon-\dfrac{1}{2^{i+1}}$ for all $m\geq M_{i+1}$. By lower semi-continuity, for each $m$, {there is $L_{i+1,m}$} such that $|\varphi_m(A_{i+1})-\varphi_m(A_{i+1}\cap n)|\leq \dfrac{1}{2^{i+1}}$ for all $n\geq L_{i+1,m}$. Let $N_{i+1}=N_i+\max\{L_{i+1,m}:\, m< M_{i+1}\}$.\vspace{1.5mm}\\
    For the last action at this stage, let $B_{i+1}=A_i\cap [N_i, N_{i+1})$. Note that $B_{i+1}$ is not a truncate of $A_{i+1}$, but rather, a truncate of $A_{i}$ from the previous stage.
\end{itemize}
Finally, let $B=\bigcup_{i\geq 1} B_i$ and $\Gamma_n=\bigcup_{i\geq 1} \Gamma_{n,i}$ as we promised. {Notice that all $B_{i+1}$ are disjoint. Also, $A_{i}\cap N_{i}=A_{i+1}\cap N_{i}$ for all $i$, hence we have $B\cap N_{i+1}=A_i\cap N_{i+1}$.} Now we prove that $B$ and $\Gamma_n$ we constructed do satisfy the requirements.
\begin{itemize}
    \item $B\subseteq A$.\\
    Each $B_{i+1}$ is a truncate of $A_i$, and $A_i\subseteq\dots\subseteq A_0=A$. {In fact, $B\subseteq A_i$ for all $i$.}
    \item $\varphi_m(B\cap n)\geq \varepsilon-\dfrac{1}{2^{i-1}}$ holds if for some $i\geq 1$ we have $M_i\leq m<M_{i+1}$ and $n\geq N_{i+1}$, and hence $\varphi^-(B)\geq \varepsilon$.

    Since $m\geq M_i$, we have $\varphi_m(A_i)\geq \varepsilon-\dfrac{1}{2^i}$. Since $m<M_{i+1}$ and therefore $N_{i+1}>L_{i+1,m}$, we have $|\varphi_m(A_i)-\varphi_m(A_i\cap N_{i+1})|\leq \dfrac{1}{2^i}$. Thus $\varphi_m(B\cap n)\geq \varphi_m(B\cap N_{i+1})=\varphi_m(A_i\cap N_{i+1})\geq \varphi_m(A_i)-|\varphi_m(A_i)-\varphi_m(A_i\cap N_{i+1})|\geq \varepsilon-\dfrac{1}{2^{i-1}}$.
    \item $(s,B)\Vdash \dot{\alpha}_n\in\Gamma_n$ for all $n$.\\
    Take an arbitrary extension $(t,D)\leq (s,B)$ such that $(t,D)\Vdash \dot{\alpha}_n=\gamma$ for some $\gamma\in$ Ord and some $n\in\mathbb{N}$. Then $t$ extends $s$ with elements from $\bigcup_{1\leq r\leq i}B_r$ for some $i$. Without loss of generality suppose $i\geq n$. So by our construction, $t=s_i^j$ for some $j$. In the corresponding step $k$ of stage $i+1$, we asked {in $\mathcal{M}$} whether
    \[\exists C\subseteq (A_{i+1,k}\setminus N_i)\ \exists \beta\in\text{Ord}\ (s_i^j, C)\Vdash \dot{\alpha}_n=\check{\beta}\]
    {By elementarity of $\mathcal{M}\prec H_\kappa$, the answer we get when asking in $\mathcal{M}$ is the same as asking in $H_\kappa$}, and the answer must be ``yes'', because $(s_i^j,D)\Vdash \dot{\alpha}_n=\gamma$ and $D\subseteq B\subseteq A_{i+1,k}$, and thus $(s_i^j,D\cap (A_{i+1,k}\setminus N_i))\Vdash \dot{\alpha}_n=\gamma$. Since the answer is ``yes'', we must've set $A_{i+1,k+1}=(A_i\cap N_i)\cup C_{i+1,k}$ at this step and thus $D\subseteq B\subseteq A_{i+1}\subseteq^* C_{i+1,k}$. Therefore $(s_i^j, D\cap C_{i+1,k})$ is a common extension of $(s_i^j, C_{i+1,k})$ and $(s_i^j, D)$. Now $(s_i^j, C_{i+1,k})\Vdash \dot{\alpha}_n=\beta_{i+1,k}$. On the other hand, $(s_i^j,D)\Vdash \dot{\alpha}_n=\gamma$. Hence $\beta_{i+1,k}=\gamma$. That is to say, $\gamma\in \Gamma_{n,i+1}$, and hence $(t, D)\Vdash \dot{\alpha}_n\in\Gamma_n$. By the arbitrariness of $(t,D)$, we see that $(s, B)\Vdash \dot{\alpha}_n\in\Gamma_n$.
\end{itemize}
\end{proof}

\begin{corollary}
$\mathbb{M}_{\geq\varepsilon}^{-}$ is proper for any $0\leq \varepsilon\leq 1$.\qed
\end{corollary}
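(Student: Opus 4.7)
The plan is simply to exhibit $\mathbb{M}_{\geq\varepsilon}^{-}$ as an instance of $\mathbb{M}_{\geq\varepsilon}^{\varphi^{-}}$ for a suitable sequence of submeasures, and then invoke the previous theorem. Concretely, I would set $\varphi_m(A) := \frac{|A \cap m|}{m}$ for $m \geq 1$ (and $\varphi_0 \equiv 0$, say, just to have the sequence defined at $m=0$). The only thing to check is that this sequence $(\varphi_m)_{m\in\mathbb{N}}$ satisfies the hypotheses of the definition preceding the theorem.

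First I would verify that each $\varphi_m$ is a lower semi-continuous submeasure on $\mathbb{N}$: $\varphi_m(\emptyset)=0$, monotonicity and subadditivity are immediate from the corresponding properties of cardinality, and $\varphi_m(\{n\})$ is finite for every $n$. Lower semi-continuity is automatic because $\varphi_m$ only depends on $A\cap m$, so $\varphi_m(A\cap n)=\varphi_m(A)$ already for all $n\geq m$. Next I would check the limiting condition: for fixed $n$, $\varphi_m(\{n\})$ equals $1/m$ once $m>n$ and is $0$ otherwise, so $\lim_{m\to\infty}\varphi_m(\{n\})=0$.

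Finally I would observe that $\varphi^{-}(A) = \liminf_{m\to\infty}\varphi_m(A) = \liminf_{m\to\infty}\frac{|A\cap m|}{m} = d^{-}(A)$ by definition. Hence $\mathbb{M}_{\geq\varepsilon}^{\varphi^{-}}$ is literally the forcing $\mathbb{M}_{\geq\varepsilon}^{-}$, both as a set of conditions and with the same order. Applying the theorem to this $\varphi^{-}$ yields properness of $\mathbb{M}_{\geq\varepsilon}^{-}$ for every $\varepsilon \in [0,1]$. There is no real obstacle here; the whole content of the corollary is the verification that the classical notion of lower density fits the axiomatic framework set up in the preceding definition, which is what makes stating the theorem at the generalized level worthwhile in the first place.
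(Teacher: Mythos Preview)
Your proposal is correct and takes exactly the same approach as the paper: the paper marks the corollary with \qed\ because it follows immediately from the preceding theorem once one observes (as the paper already notes right after the definition) that the choice $\varphi_m(A)=\frac{|A\cap m|}{m}$ makes $\varphi^{-}=d^{-}$ and hence $\mathbb{M}_{\geq\varepsilon}^{\varphi^{-}}=\mathbb{M}_{\geq\varepsilon}^{-}$. Your write-up simply spells out the routine verification of the submeasure axioms, which the paper leaves implicit.
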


~

Now the remaining question is whether $\mathbb{M}^{\varphi^-}_{\geq \varepsilon}$ adds a dominating real. The answer is: it depends. We give one example of such forcing not adding dominating real{s} and one that does.

\begin{example}
\label{cofinite}
Consider $\mathbb{M}^{\varphi^-}_{\geq \varepsilon}$ where
\[\varphi_m(A)=\begin{cases}1 &\quad m\in A\\ 0 &\quad m\notin A\end{cases}\]
and $0<\varepsilon\leq 1$. Then $\varphi^-(A)\geq \varepsilon$ iff $A$ is cofinite. Hence Cohen forcing densely embeds in $\mathbb{M}^{\varphi^-}_{\geq \varepsilon}$ by the map $s\mapsto ({s^{-1}(1)},\omega\setminus\dom(s))$. Therefore it does not add dominating real{s}.
\end{example}

{Next, we will show that $\mathbb{M}_{\geq\varepsilon}^{-}$ adds a dominating real. The proof relies on the following (weak) Darboux property of lower density:}

\begin{lemma}
{For any $A\subseteq \mathbb{N}$ with $d^-(A)=\delta>\varepsilon$, there is $A'\subset A$ such that $d^-(A)=\varepsilon$.}
\end{lemma}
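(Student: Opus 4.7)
The plan is to obtain $A'$ by subsampling the elements of $A$ at a carefully chosen rate; I read the conclusion ``$d^-(A)=\varepsilon$'' as a typo for $d^-(A')=\varepsilon$. The key observation is that if one enumerates $A$ in increasing order as $a_0<a_1<a_2<\dots$ and sets $A'=\{a_k:k\in B\}$ for some $B\subseteq\mathbb{N}$ whose ordinary asymptotic density $d(B)=\beta$ exists, then the lower density of $A'$ should factor as $\beta\cdot d^-(A)$. Choosing $\beta:=\varepsilon/\delta$ then yields the desired value $\varepsilon$.

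First I will dispose of the degenerate case $\varepsilon=0$: any infinite sparse subset of $A$, say $\{a_{k^2}:k\geq 1\}$, has lower density $0$. For $\varepsilon>0$, I set $\beta:=\varepsilon/\delta\in(0,1)$ and fix any $B\subseteq\mathbb{N}$ with $d(B)=\beta$; the explicit choice $B=\{k\in\mathbb{N}:\lfloor\beta(k+1)\rfloor>\lfloor\beta k\rfloor\}$ works. I then define $A':=\{a_k:k\in B\}$, which is a proper infinite subset of $A$ because $\beta<1$.

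The heart of the verification is the identity
\[
\frac{|A'\cap n|}{n}\;=\;\frac{|B\cap K_n|}{K_n}\cdot\frac{K_n}{n}, \qquad\text{where } K_n:=|A\cap n|,
\]
valid once $K_n\geq 1$; it holds because the elements of $A$ below $n$ are precisely $a_0,\dots,a_{K_n-1}$, so that $a_k<n\iff k<K_n$ and hence $|A'\cap n|=|B\cap K_n|$. Since $A$ is infinite, $K_n\to\infty$, so the first factor tends to $\beta$. By hypothesis $\liminf_n K_n/n=\delta$. The step I expect to be mildly delicate is concluding that $\liminf_n$ distributes across the product to give $\beta\delta=\varepsilon$; this follows from the standard fact that $\liminf_n(x_n y_n)=(\lim_n x_n)(\liminf_n y_n)$ whenever $x_n\to x>0$ and $y_n\geq 0$. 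No substantive obstacle arises: the construction is entirely explicit, and the argument uses only elementary properties of the lower density.
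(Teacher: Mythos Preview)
Your argument is correct. The paper itself does not give a proof of this lemma at all: it simply cites Corollaries~1 and~2 of \cite{MR3597402}, where the Darboux-type properties of upper and lower density are established in general. So your approach is genuinely different in that you supply a self-contained elementary construction --- subsampling $A$ along a set $B\subseteq\mathbb{N}$ of ordinary density $\beta=\varepsilon/\delta$ --- rather than invoking an external result. What your route buys is transparency: the factorization $\frac{|A'\cap n|}{n}=\frac{|B\cap K_n|}{K_n}\cdot\frac{K_n}{n}$ together with the standard product rule $\liminf(x_ny_n)=(\lim x_n)(\liminf y_n)$ for $x_n\to x>0$ isolates exactly the mechanism at work, and the proof fits in a few lines. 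The citation, on the other hand, delivers the full Darboux property (realizing every value in $[0,d^-(A)]$, and analogous statements for $d^+$) in one stroke, which is more than is needed here but is the form in which such results are usually packaged.
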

\begin{proof}
{See \cite{MR3597402} Corollary 1\&2.}
\end{proof}

\begin{theorem}
$\mathbb{M}_{\geq\varepsilon}^{-}$ adds a dominating real for any $0<\varepsilon\leq 1$.
\end{theorem}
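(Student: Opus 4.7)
The plan is to show that for any $f\in V\cap\omega^\omega$ and any condition $(s_0,A_0)\in\mathbb{M}_{\geq\varepsilon}^{-}$, a suitable extension $(s_0,A^*)\leq(s_0,A_0)$ forces a name $\dot g$ (built uniformly from the generic real $X$) to satisfy $\dot g\geq^*\check f$. By the Darboux lemma above, after thinning we may first assume that $d^-(A_0)=\varepsilon$.

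The construction of $A^*$ will be a fusion in which the stem stays equal to $s_0$. Build a descending sequence $A_0\supseteq A_1\supseteq A_2\supseteq\cdots$ with $d^-(A_i)=\varepsilon$, applying Darboux at stage $i$ to thin $A_{i-1}$ to $A_i$ so that a $\liminf$-witness of $d^-(A_i)=\varepsilon$ occurs at some index $n_i>f(i)$; in effect, each stage encodes the value $f(i)$ into the density profile. Then $A^*$ is assembled as a diagonal of the $A_i$'s on rapidly increasing blocks $[N_{i-1},N_i)$ with $N_i>f(i)$, where the $N_i$ are chosen large enough that $d^-(A^*)\geq\varepsilon$ is preserved.

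The choice of $\dot g$ requires care. Two properties of the generic $X$ play a role: first, $d^-(X)=0$, which can be seen by noting that for any $(t,B)$ and any $\delta>0$ the extension $(t,B\setminus n)$ with $n>|t|/\delta$ already forces $|X\cap n|/n<\delta$; second, $d^+(X)\geq\varepsilon$, obtained by extending the stem to absorb a large initial segment of $B$. The first says $X$ has arbitrarily sparse positions, which is the resource to exploit; the second rules out the classical Mathias choice, namely the enumeration $e_X$ of $X$, since $d^+(X)\geq\varepsilon$ forces $e_X(k)/k$ to stay close to $1/\varepsilon$ along a subsequence, so that $e_X$ cannot dominate any super-linear $f$. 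Instead I will define $\dot g(k)$ to be (a regularised version of) the $k$-th sparse position of $X$ at a suitable threshold; the block structure of $A^*$ should then force this $k$-th sparse position to lie beyond $f(k)$ for all sufficiently large $k$.

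The main obstacle is the fusion step. A naive Laver-style fusion, where $A^*$ would be a sparse diagonal of single elements, immediately fails because such a diagonal has $d^-=0$, violating the density constraint. The Darboux property is essential for avoiding this: it allows each $A_i$ to be thinned to density exactly $\varepsilon$, so that $A^*$, assembled from density-$\varepsilon$ blocks, inherits $d^-\geq\varepsilon$ provided the block sizes $N_i-N_{i-1}$ grow fast enough relative to $f$. A secondary challenge is aligning the exact definition of $\dot g$ with the block structure of $A^*$, so that the rapid growth $N_i>f(i)$ translates cleanly into $\dot g(k)>f(k)$ at the correct indices; the index-scaling between the block boundaries of $A^*$ and the sparse-position enumeration of $X$ needs to be matched carefully, and this is where the bulk of the detailed work will lie.
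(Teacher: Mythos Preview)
Your proposal has a genuine gap at the definition of $\dot g$. You suggest taking $\dot g(k)$ to be ``the $k$-th sparse position of $X$ at a suitable threshold,'' exploiting that $d^-(X)=0$. But a condition $(s_0,A^*)$ gives only an \emph{upper} bound on $|X\cap n|$, never a lower one: any further extension may put arbitrarily few elements of $A^*$ into the stem, so nothing you do to $A^*$ prevents $|X\cap n|/n$ from dropping below your threshold already for small $n$. Concretely, below $(s_0,A^*)$ there are extensions whose stem adds a single element per block, and these force the first several sparse positions of $X$ to occur long before $f(k)$. Thus $(s_0,A^*)$ cannot force $\dot g\geq^* f$ for this choice of $\dot g$, no matter how the blocks $[N_{i-1},N_i)$ are arranged. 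Your final paragraph essentially flags this alignment problem as unresolved; it is not a bookkeeping detail but the heart of the matter, because the side condition simply does not control the quantity you want to read off.

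The paper's argument avoids reading any density information from $X$. After passing to a maximal antichain on which $d^-(A_0)=\varepsilon$, it fixes once and for all a decreasing tower $A_0\supset A_1\supset A_2\supset\cdots$ inside $A_0$ (taking every $2^n$-th element) with $d^+(A_n)\le 2^{-n}$, and checks that $d^-(A\cap A_n)\ge \varepsilon/2^n>0$ for every $A\subseteq A_0$ with $d^-(A)\ge\varepsilon$. The dominating function is then $F(n)=\min(g\cap A_n)$, which depends on a single element of the generic rather than on its density profile. For a given ground-model $f$, one simply discards from the side condition the density-zero set $A_f=\bigcup_n\{k\in A_n:k\le f(n)\}$; no fusion is needed. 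So the key idea you are missing is to build a fixed combinatorial ladder $\{A_n\}$ on $A_0$ with geometrically shrinking upper density, rather than trying to encode $f$ into a fusion and then decode it from density statistics of $X$.
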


\begin{proof}
Let $G$ be the generic filter and $g=\bigcup \{s:\, \exists A\,(s,A)\in G\}$. By {the} Darboux property, we can fix a maximal antichain $M\subseteq \mathbb{M}_{\geq\varepsilon}^-$ such that $d^-(A)=\varepsilon$ for all $(s,A)\in M$. Suppose $M\cap G=\{(s_0,A_0)\}$. Enumerate $A_0$ as $a_0<a_1<a_2<\dots$ and let $A_n=\{a_i:\, i=2^{n}k,\ k\in\mathbb{N}^+\}$ {for $n\geq 1$}. Clearly $A_0\supset A_1\supset A_2\supset \dots$ and $d^+(A_n)=\dfrac{1}{2^{n}}d^+(A_0)\leq \dfrac{1}{2^{n}}$. Also, $d^-(A_0\setminus A_n)=\dfrac{2^{n}-1}{2^{n}}\varepsilon$. Therefore for all $A\subseteq A_0$ with $d^-(A)=\varepsilon$ and for all $n\geq 0$, we have $$d^-(A\cap A_n)\geq d^-(A)-d^-(A\setminus A_n)\geq \varepsilon - d^-(A_0\setminus A_n) =\dfrac{\varepsilon}{2^{n}}$$ and thus $A\cap A_n$ is nonempty.

In $V[G]$, define function $F:\omega\rightarrow\omega$ by
\[{F}(n)=\min\{k:\,k\in g\cap A_n \}\]
Such $k$ exists because $D_{n}=\{(s,A): \exists k\in s\cap A_n\}$ is dense below $(s_0,A_0)$, since $A\cap A_n$ is nonempty for all $(s,A)\leq (s_0,A_0)$. We show that $F$ is a dominating real.

Fix $f\in (\omega^\omega)^V$ and $(s,A)\in\mathbb{M}_{\geq\varepsilon}^-$ such that $(s,A)\leq (s_0,A_0)$. Let \[A_f=\bigcup_{n\in\omega}\{k\in A_n: k\leq f(n)\}\] {We claim that $d^-(A\setminus A_f)= \varepsilon$.} Indeed, since each summand of $A_f$ is finite, for all $m\in\mathbb{N}$ we have
\begin{align*}
    d^+(A_f)&=d^+(\bigcup_{n=1}^\infty\{k\in A_n: k\leq f(n)\})\\
    &=d^+(\bigcup_{n= m}^\infty\{k\in A_n: k\leq f(n)\})\\
    &\leq d^+(\bigcup_{n= m}^\infty A_n)\\
    &=d^+(A_m)\leq \dfrac{1}{2^{m}}
\end{align*}
and therefore $d(A_f)=0$. Thus $(s,A\setminus A_f)$ is a condition. Then, {since the infinite part $A\setminus A_f$ has excluded any $k\in A_n$ with $k\leq f(n)$, we have} \[{(s, A\setminus A_f)\Vdash \forall n\ (\dot F(n)\geq\min(A\setminus A_f)\rightarrow f(n)<\dot F(n)).}\]
{Note that $F(n)\in A_n$ and $\min(A_n)\geq 2^n$, and therefore this translates to}
\[{(s, A\setminus A_f)\Vdash \forall n>\log_2 \min(A\setminus A_f)\ (f(n)<\dot F(n)).}\]
Hence $D_f=\{(s,A):\, (s,A)\Vdash\exists m\forall n>m\ (f(n)<\dot F(n))\}$ is dense below $(s_0,A_0)$ for all ground model $f\in \omega^\omega$.
\end{proof}

{Note that for the submeasure $\varphi^-$ in Example \ref{cofinite}, if we attempt to apply the same proof, $A_f$ will be infinite and therefore $(s,A\setminus A_f)$ is no longer a condition.}

\vspace{8mm}

We switch to another density Mathias forcing. Instead of $d^-(A)\geq \varepsilon$, the condition is now $d^-(A)>0$. This forcing notion was also investigated by Matthew Harrison-Trainor, Liu Lu and Patrick Lutz from a recursion-theoretic perspective in \cite{harrisontrainor2023coding}. The following proof is inspired by Proposition 4.5 of their paper, {where they constructed $A_X\subseteq \mathbb{N}$ for a fixed set $X\subseteq \mathbb{N}$, such that any subset $B\subseteq A_X$ with $d^-(B)>0$ computes $X$ uniformly.}

\begin{theorem}
$\mathbb{M}^-_{>0}$ collapses {the} continuum to $\omega$.
\end{theorem}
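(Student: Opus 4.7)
The plan is to use an adapted Harrison--Trainor coding to produce a $\mathbb{M}^-_{>0}$-name $\dot f$ for a surjection from $\omega$ onto $(2^\omega)^V$, which will witness that the continuum is collapsed. Recall from the cited Proposition 4.5 of \cite{harrisontrainor2023coding} that for each $X \in 2^\omega$ one constructs, in $V$, a set $A_X \subseteq \omega$ with $d^-(A_X) > 0$ together with a fixed recursive functional $\Phi$ such that $\Phi^B = X$ whenever $B \subseteq A_X$ has $d^-(B) > 0$.

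First I would establish two refinements. \emph{Localization}: given any $A \subseteq \omega$ with $d^-(A) > 0$ and any $X \in 2^\omega$, one can find $A' \subseteq A$ with $d^-(A') > 0$ enjoying the same decoding property for $X$. \emph{Sequence coding}: given $(X_n)_{n \in \omega}$, one can build a single $A$ with $d^-(A) > 0$ such that any $B \subseteq A$ with $d^-(B) > 0$ uniformly decodes every $X_n$. Both should follow from the block-based nature of the Harrison--Trainor construction: for Localization one reruns the coding along a sufficiently thick skeleton inside $A$; for Sequence coding one interleaves the individual codings along disjoint, increasingly sparse skeletons of $\omega$ whose union still has positive lower density. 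The Darboux property for $d^-$ already exploited earlier in this section supplies the density arithmetic.

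With these refinements in hand, I define $\dot f$ by setting $\dot f(n) = \Phi(A)$ for any $(t,A) \in G$ whose $A$ carries the designated code for position $n$; uniformity of $\Phi$ together with compatibility inside $G$ makes this value independent of the witnessing condition. The core density claim is: for every $(s, A_0) \in \mathbb{M}^-_{>0}$, every $X \in (2^\omega)^V$, and every $n_0 \in \omega$, there is a stronger $(s, A') \leq (s, A_0)$ and some $n \geq n_0$ with $(s, A') \Vdash \dot f(n) = \check X$. This is immediate from Localization applied to $A_0$ with a sequence that places $X$ at position $n$ and fills the earlier positions arbitrarily; varying over all $X$ and using genericity gives that $\dot f$ is onto $(2^\omega)^V$.

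The main obstacle I anticipate is that the generic real $g$ itself has $d^-(g) = 0$: by the Darboux property, conditions with $d^-(A) < \varepsilon$ are dense for every $\varepsilon > 0$, so $\Phi^g$ cannot be applied directly. Reading the code off ground-model $A$'s occurring in conditions in $G$ sidesteps this, but converts the problem into carrying out the Sequence coding inside any given $A_0$ while preserving $d^- > 0$. Executing this combinatorial step---interleaving $\omega$-many block codings inside an arbitrary positive-density set without losing lower density---is where the proof does its real work, and is where I expect the bulk of the bookkeeping to go, though the block structure inherited from Harrison--Trainor and the Darboux property should make it routine.
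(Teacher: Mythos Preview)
Your proposal correctly identifies the Harrison--Trainor coding as the source, which is also the paper's starting point, but there is a genuine gap in how you define $\dot f$. You set $\dot f(n)=\Phi(A)$ for ``any $(t,A)\in G$ whose $A$ carries the designated code for position $n$,'' yet you never say what it means for a ground-model $A$ to carry a code at a \emph{specific} position, nor how the extension recognizes this. Your Sequence coding is supposed to supply the position structure by interleaving codings on ``disjoint, increasingly sparse skeletons of $\omega$ whose union still has positive lower density,'' but that is exactly where the plan fails: an arbitrary $A_0$ with $d^-(A_0)>0$ can have density-zero intersection with \emph{every} such skeleton simultaneously, so Localization gives you no position at which to code. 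The compatibility argument you sketch for well-definedness is fine once a workable position structure exists, but you have not built one.

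The paper resolves precisely this obstruction by indexing positions not by an abstract $n$ but by the \emph{density level} of the side condition. It lays down in $V$ a single interval system $I_{a,b,c}^k$; given $(s,A)$ one takes the least $a'$ with $d^-(A)>1/(a'+2)$ and codes the target real at coordinate $a'$, removing alternate sub-intervals of $I_{a',b,c}$ from $A$. The key claim---that this removal preserves $d^-(A_0)>0$---is exactly the ``routine bookkeeping'' you defer, and it works only because the interval growth is calibrated to $a'$. Crucially, the decoder $F(a)(b)$ is then read directly off the generic real $g$ by checking the parity of the sub-interval containing the first element of $g$ in $\bigcup_{c,k}I^{k}_{a,b,c}$; no appeal to conditions in $G$ is needed, and $d^-(g)=0$ is harmless because $F$ only asks $g$ to meet certain intervals, which genericity guarantees. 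So the obstacle you flag is real for the literal functional $\Phi$, but the fix is to redesign the decoder around $g$, not to retreat to side conditions.
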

\begin{proof}
Fix a bijection $f:\mathbb{N}\rightarrow\mathbb{N}\times\mathbb{N}\times\mathbb{N}$. We denote $f(n)$ by $\lr{a_n,b_n,c_n}$. In the following we construct disjoint intervals $I_{a_n,b_n,c_n}$ for each $n$ inductively. Later, for some fixed condition $(s,A)$ and set $S\in (2^\omega)^V$, we will code {$S$} with a one-step extension $(s,A_0)\leq (s,A)$. {The $b$-th digit of $S$ will be stored infinitely many times in $A_0\cap I_{a,b,c}$ for the smallest $a$ such that $d^-(A)>{1}/{(a+2)}$ and for all $c\in\mathbb{N}$.}

Let $I_{a_0,b_0,c_0}=[0,1)$. Suppose $I_{a_n,b_n,c_n}=[i_n,i_{n+1})$ is constructed. Then we let \[I_{a_{n+1},b_{n+1},c_{n+1}}=[i_{n+1},(2a_{n+1}+4)^{2c_{n+1}+3}(2a_{n+2}+4)\,i_{n+1}).\] We further divide each $I_{a_n,b_n,c_n}$ into intervals \[I_{a_n,b_n,c_n}^k=[(2a_n+4)^k\cdot i_{n+1},(2a_n+4)^{k+1}\cdot i_{n+1})\] for $0\leq k\leq 2c_n+2$ and \[I_{a_n,b_n,c_n}^{2c_n+3}=[(2a_n+4)^{2c_n+3}\cdot i_{n+1},(2a_{n+1}+4)^{2c_{n+1}+3}(2a_{n+2}+4)\,i_{n+1}).\]
Later in the coding step $A_0\subseteq A$, depending on whether the $b$-th digit of $S$ is $1$ or $0$, we will either store the information in the corresponding odd intervals $I_{a,b,c}^1,I_{a,b,c}^3,\dots,I_{a,b,c}^{2c+1}$, or in the even intervals $I_{a,b,c}^2,I_{a,b,c}^4,\dots,I_{a,b,c}^{2c+2}$. {The first interval $I_{a,b,c}^{0}$ and last interval $I_{a,b,c}^{2c+3}$} will act as buffer areas; they aren't necessary but having such extra intervals makes the proof easier.

\begin{claim}
Fix $a'\in\mathbb{N}$. For any $A$ such that $d^-(A)>1/(a'+2)$, and any $A_0\subseteq A$ satisfying
\begin{itemize}
    \item For $a=a'$ and $b,c\in \mathbb{N}$, either \[I_{a,b,c}\cap (A\setminus A_0)\subseteq I_{a,b,c}^2 \cup I_{a,b,c}^4\cup \dots \cup I_{a,b,c}^{2c+2}\] or \[I_{a,b,c}\cap (A\setminus A_0)\subseteq I_{a,b,c}^1 \cup I_{a,b,c}^3\cup \dots \cup I_{a,b,c}^{2c+1};\]
    \item For $a\neq a'$ and $b,c\in \mathbb{N}$, $I_{a,b,c}\cap (A\setminus A_0)= \emptyset$.
\end{itemize}
we have $d^-(A_0)>0$.
\end{claim}
\begin{proof}[Proof of claim]
Since $d^-(A)>1/(a'+2)$, there is $n$ such that $\dfrac{|x\cap A|}{x}>\dfrac{1}{a'+2}$ for all $x>n$. Consider any interval $I_{a,b,c}^k=[i_r,i_{r+1})$ such that $i_r\gg n$. {It is of one of the following three cases:} 
\begin{enumerate}
    \item $a=a'$, $k\neq 0$ and $I_{a',b,c}^{k-1}=\big[\dfrac{i}{2a'+4},i\big)$ has empty intersection with $A\setminus A_0$:\\
    By counting we have $|I_{a',b,c}^{k-1}\cap A_0|=|I_{a',b,c}^{k-1}\cap A|\geq |i\cap A|-\dfrac{i}{2a'+4}>\dfrac{i}{2a'+4}$. Thus $\dfrac{|x\cap A_0|}{x}\geq \dfrac{|I_{a',b,c}^{k-1}\cap A_0|}{x}> \dfrac{1}{(2a'+4)^2}$ for all $x\in I_{a',b,c}^k$.
    \item $a=a'$, $k\neq 0$ and $I_{a',b,c}^{k-1}\cap (A\setminus A_0)\neq \emptyset$:\\
    In this case we must have $k\geq 2$, since $I_{a',b,c}^0\cap (A\setminus A_0)=\emptyset$. Moreover, $I_{a',b,c}^{k-2}$ and $I_{a',b,c}^{k}$ both have empty intersection with $A\setminus A_0$. Therefore $\dfrac{|x\cap A_0|}{x}\geq \dfrac{|I_{a',b,c}^{k-2}\cap A_0|}{x}> \dfrac{1}{(2a'+4)^3}$ for all $x\in I_{a',b,c}^k$.
    \item $a= a'$ and $k=0$, or $a\neq a'$;\\
    Notice that $I_{a,b,c}\cap (A\setminus A_0)$ is empty when $a\neq a'$. Also we have a buffer area $I_{a,b,c}^{2c+3}$ at the end of each $I_{a,b,c}$ which always has empty intersection with $A\setminus A_0$. Hence in case 3 we always have $\big[\dfrac{i}{2a'+4},i\big)\cap (A\setminus A_0)=\emptyset$. By the same argument as in case 1, we have $\dfrac{|x\cap A_0|}{x}> \dfrac{1}{(2a'+4)^2}$ for all $x\in I_{a,b,c}^k$.
\end{enumerate}
Hence, we have $d^-(A_0)\geq \dfrac{1}{(2a'+4)^3}>0$.\end{proof}

~

Now let's define the coding. Let $G$ be the generic filter and $g=\bigcup\{s:\,(s,A)\in G\}$. Define $F: \mathbb{N}\times \mathbb{N}\rightarrow 2^\omega$ in $V[G]$ as follows. Fix $a,b,n\in\mathbb{N}$. Let
\[{l(a,b)=\min\{l\in\mathbb{N}:\, \exists c\exists k \,(l\in g \cap I_{a,b,c}^k\text{ and }1\leq k\leq 2c+2)\}}.\]
To see such $l$ exists, notice that missing all the consecutive intervals $I_{a,b,c}^k$ for $1\leq k\leq 2c+2$ forces $\dfrac{|x\cap A|}{x}$ to temporarily drop below $\dfrac{1}{(2a+4)^{2c+2}}$, and therefore any set $A$ with $d^-(A)>0$ cannot miss all these intervals for fixed $a,b$. Now define $F(a)\in 2^\omega$ by
\[{F(a)(b)=\begin{cases}
    1 & \qquad \text{if $l(a,b)\in I_{a,b,c}^k$ where $k$ is odd}\\
    0 & \qquad \text{if $l(a,b)\in I_{a,b,c}^k$ where $k$ is even}
\end{cases}}\]
and $\dot{F}$ be a name for $F$. It remains to show $D_S=\{(s,A):\, \exists a\, (s,A)\Vdash \dot{F}(a)=S\}$ is dense for all $S\in (2^\omega)^V$.

Fix $(s,A)$ and $S\in (2^\omega)^V$. Choose the smallest natural number $a'$ such that $d^-(A)>1/(a'+2)$. Now, for all $b,c\in\mathbb{N}$, we remove from $A$ all the even intervals $I_{a',b,c}^k$ if $S(b)=1$, and all the odd intervals $I_{a',b,c}^k$ if $S(b)=0$. That is,
\[A_0=A\setminus\big(\bigcup\Big\{I_{a',b,c}^k:\,S(b)\equiv k+1\text{ mod } 2,\,1\leq k\leq 2c+2\Big\}\big)\]
By the above claim, $(s,A_0)$ is a condition. Now, $(s,A_0)$ already decides whether $l(a',b)$ lies in an odd interval (or even interval) $I_{a',b,c}^k$, because either all the even intervals (or all the odd intervals) have empty intersection with $A_0$, depending on the value of $S(b)$. Thus $(s,A_0)\Vdash \dot{F}(a')=S$. Therefore, $D_S$ is dense for all ground model $S\in 2^\omega$.
\end{proof}

\section{Silver Forcings with Density}

In this section we investigate the Silver forcings that are yet to be categorized in \cite{LMS-R}. The first density Silver forcing we investigate is $\mathbb{V}^-_{\geq \varepsilon}$. Unlike the Mathias case, this forcing collapses {the} continuum. The intuition is, each condition $f$ is allowed to contain infinite positive information, and hence we can code any real in the ground model via one-step extension, by simply adding it to the domain of $f$. The tough part is how to recover this information from the generic. Namely, we need to find which bits of $g$ were used to code $f$ back in the one-step extension. We start with a lemma.

\begin{lemma}
\label{interval}
Fix $A$ such that $d^-(A)=\varepsilon$. Then there exists an infinite sequence $N_0<N_1<N_2<\dots$ such that for every $B\subseteq A$ with $d^-(B)=\varepsilon$, we always have
\[\lim_{i\rightarrow\infty}\frac{|(A\setminus B)\cap [N_i,N_{i+1})|}{N_{i+1}-N_i}=0\]
Moreover, {$\{N_i\}_{i\in\omega}$ can grow arbitrarily fast}.
\end{lemma}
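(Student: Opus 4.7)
The plan is to build $\{N_i\}$ recursively, selecting each $N_{i+1}$ to be an index at which the average density of $A$ is close to $\varepsilon$ from above, and large enough relative to $N_i$. Since $d^-(A) = \varepsilon = \liminf_n |A \cap n|/n$, for every $\eta > 0$ and every $M$ there are infinitely many $n > M$ with $|A \cap n|/n \le \varepsilon + \eta$. So at stage $i+1$, given $N_i$ and any prescribed growth function $g$, I would pick $N_{i+1} > \max\{g(N_i), (i+1) N_i\}$ satisfying $|A \cap N_{i+1}|/N_{i+1} \le \varepsilon + 1/(i+1)$. The factor $(i+1) N_i$ is inserted so that $N_i/N_{i+1} \to 0$, which will be crucial for replacing $N_{i+1} - N_i$ by $N_{i+1}$ up to a vanishing error; the factor $g(N_i)$ takes care of the ``moreover'' clause about arbitrary growth.

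The verification is then a short counting argument. Fix $B \subseteq A$ with $d^-(B) = \varepsilon$ and $\delta > 0$; then there is $n_\delta$ such that $|B \cap n|/n \ge \varepsilon - \delta$ for all $n \ge n_\delta$. For every $i$ large enough that $N_{i+1} \ge n_\delta$, using $B \subseteq A$,
\[
|(A \setminus B) \cap [N_i, N_{i+1})| \;\le\; |A \cap N_{i+1}| - |B \cap N_{i+1}| \;\le\; \left(\frac{1}{i+1} + \delta\right) N_{i+1}.
\]
Dividing by $N_{i+1} - N_i \ge (1 - 1/(i+1)) N_{i+1}$, the ratio is bounded by $\bigl(1/(i+1) + \delta\bigr)\big/\bigl(1 - 1/(i+1)\bigr)$, which tends to $\delta$ as $i \to \infty$. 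Since $\delta > 0$ was arbitrary, $\limsup_i$ of the ratio is $0$.

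The only real subtlety, to my mind, is the asymmetry between $d^-(A) = \varepsilon$ (a liminf, providing a sparse subsequence of indices where $A$ has density nearly $\varepsilon$) and $d^-(B) = \varepsilon$ (which gives only an eventual one-sided bound $|B \cap n|/n \ge \varepsilon - \delta$ with a $B$-dependent threshold $n_\delta$). The $N_i$ must be picked from the first kind of good indices using only information about $A$, so the bound for $B$ has to be wielded existentially in the verification. Forcing $N_{i+1} \gg N_i$ reduces the interval estimate on $[N_i, N_{i+1})$ to the full-initial-segment estimate on $[0, N_{i+1})$, where the two density asymptotics combine cleanly; and since no upper density information is needed for $B$, the proof goes through for the (weaker) $d^-$ hypothesis as stated.
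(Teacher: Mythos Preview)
Your proposal is correct and follows essentially the same approach as the paper: choose the $N_i$ along a subsequence where $\lvert A\cap N_i\rvert/N_i$ is close to $\varepsilon$ with sufficient spacing, then combine the upper bound on $\lvert A\cap N_{i+1}\rvert$ with the lower bound on $\lvert B\cap N_{i+1}\rvert$ coming from $d^-(B)=\varepsilon$. The paper uses the weaker spacing $N_{i+1}>2N_i$ and phrases the estimate as a contradiction rather than your direct $\limsup$ argument, but these are cosmetic differences.
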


\begin{proof}
Take $\{N_i\}_{i\in\omega}$ such that $\dfrac{|A\cap N_i|}{N_i}\leq \varepsilon+\dfrac{1}{i}$ and $N_{i+1}>2N_i$ for all $i$. Such sequence exists since $d^-(A)=\varepsilon$. Suppose we have some $B\subseteq A$ with $d^-(B)=\varepsilon$ such that
\[\frac{|(A\setminus B)\cap [N_i,N_{i+1})|}{N_{i+1}-N_i}\geq \delta\]
for infinitely many $i\in\mathbb{N}$ and some fixed $\delta>0$. Then for these $i\in\mathbb{N}$, we have
\[\frac{|B\cap N_{i+1}|}{N_{i+1}}\leq\frac{|A\cap N_{i+1}|-|(A\setminus B)\cap [N_i,N_{i+1})|}{N_{i+1}}\leq \varepsilon+\frac{1}{i+1}-\frac{\delta}{2}\]
and therefore $d^-(B)\leq \varepsilon-\dfrac{\delta}{2}$. Contradiction. The moreover part of the lemma follows from the proof itself since we never imposed a cap on the growth of $N_{i+1}$.
\end{proof}

\begin{theorem}
For any $0<\varepsilon\leq 1$, forcing with $\mathbb{V}^-_{\geq\varepsilon}$ collapses {the} continuum to $\omega$.
\end{theorem}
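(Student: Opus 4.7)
The plan is to construct in $V$ a $\mathbb{V}^-_{\geq\varepsilon}$-name $\dot F\colon \omega \to 2^\omega$ and to show that for every $S \in (2^\omega)^V$ the set $D_S = \{f' : \exists a \in \omega\, (f' \Vdash \dot F(a) = \check S)\}$ is dense. Given $f_0 \in \mathbb{V}^-_{\geq\varepsilon}$, set $A_0 = \omega \setminus \dom(f_0)$. Using the Darboux property for lower density, I first refine $f_0$ to a one-step extension for which $d^-(A_0) = \varepsilon$, then apply Lemma \ref{interval} to $A_0$ to obtain an increasing $V$-sequence $N_0 < N_1 < \cdots$ that, by the ``moreover'' clause, may be chosen to grow as rapidly as we wish. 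The crucial consequence is that any further extension $f' \le f_0$ in the forcing adjoins to $\dom(f_0)$ only a set whose density within each slab $[N_i, N_{i+1})$ tends to $0$.

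The encoding step places $S$ into a one-step extension $f' \le f_0$: inside each slab $[N_i, N_{i+1})$, choose a short initial block $T_i \subseteq A_0 \cap [N_i, N_{i+1})$ and extend $f_0$ by prescribing on each $T_i$ a ``marker plus payload'' pattern whose payload bit equals $S(i)$. Calibrating $|T_i|$ to be small enough relative to the growth of $\{N_i\}$ makes $\bigcup_i T_i$ a density-$0$ set, so $\omega \setminus \dom(f')$ still has lower density $\varepsilon$ and $f'$ is a condition. The decoding step defines $\dot F$ in $V$: fix a master partition of $\omega$ into slabs together with, for each $a \in \omega$, a marker pattern $\pi^a_i$ of rapidly growing length, and set $\dot F(a)(i) \in 2$ to be the bit of $g$ immediately following the first occurrence of $\pi^a_i$ inside the $i$-th master slab. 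When the encoder's marker is $\pi^a_i$, the decoder will recover $S(i)$ provided this is in fact the first occurrence of $\pi^a_i$ in that slab.

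The main obstacle is exactly this last provision: showing that for every $f_0$ one can select $a$ so that no parasitic occurrence of $\pi^a_i$ precedes the encoder's copy. Parasitic copies can arise from two sources, namely the values of $f_0$ already fixed on $\dom(f_0) \cap [N_i, N_{i+1})$, and the generic's free values on $A_0 \cap [N_i, N_{i+1}) \setminus T_i$. The first source is defeated by letting $\{N_i\}$ outpace any finite initial segment of $f_0$'s data (permitted by the lemma's growth freedom) and choosing $a$ large enough that $\pi^a_i$ is too long to appear by accident inside the bounded part of $f_0$ that matters for slab $i$. The second source is defeated by Lemma \ref{interval}: any future extension can fill in only $o(N_{i+1}-N_i)$ of the positions in $A_0 \cap [N_i, N_{i+1})$, so placing $T_i$ at the leftmost sufficiently dense cluster of $A_0$-positions inside the slab guarantees the encoded marker sits strictly before any parasitic one the generic could produce. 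Carrying out this two-pronged combinatorial bookkeeping uniformly in $f_0$ is the technical crux of the theorem.
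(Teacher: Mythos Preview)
Your outline has a genuine gap: the one-dimensional slab scheme cannot recover from conditions $f\le f_0$ that have already decided the generic on an arbitrarily long initial block of $A_0$-positions, and the parameter $a$ alone does not repair this.

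Concretely, for any $k$ the partial function $f = f_0 \cup \{(n,0): n \in A_0 \cap [0,N_k)\}$ is a condition below $f_0$ (removing a finite set from $A_0$ does not lower its lower density). For every $i<k$, this $f$ already determines $g$ on all of $A_0\cap [N_i,N_{i+1})$, hence determines every quantity your decoder computes from slab $i$, including ``the bit following the first occurrence of $\pi^a_i$'', for \emph{every} $a$. No extension $f'\le f$ can change these bits, so if $S\in 2^\omega$ disagrees with what $f$ forces on some $i<k$, there is no $a$ and no $f'\le f$ with $f'\Vdash \dot F(a)=S$. Your two proposed fixes do not touch this: making $\pi^a_i$ long does nothing once $f$ has fixed the whole $A_0$-part of slab $i$, and the phrase ``finite initial segment of $f_0$'s data'' seems to import a Mathias-style finite stem into a setting where $\dom(f_0)$ already occupies a $(1-\varepsilon)$-fraction of every slab. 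Likewise, ``placing $T_i$ at the leftmost sufficiently dense cluster of $A_0$-positions'' is exactly where $f$ may already live.

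What the paper does instead is use a \emph{two}-dimensional array of intervals $I_i^j$ and a majority-vote reading on a short prefix of $I_i^j\cap A_0$. Lemma~\ref{interval} gives, for each fixed $i$ and each $f\le f_0$, a threshold $n_i$ beyond which $\dom(f)\setminus\dom(f_0)$ occupies fewer than a quarter of the checked slots in $I_i^j$; past that threshold the encoder can force the majority in $I_i^{n_i}$ and in $I_i^{n_i+1},\dots,I_i^{n_i+n_{i+1}+1}$ while adding only a density-zero set to $\dom(f)$. The essential extra idea you are missing is that the infinite sequence $(n_i)_{i\in\omega}$ (which depends on $f$, not just on a single natural number $a$) is itself written into the generic: the decoder starts at $m_{0,n}=n$ and reads $m_{i+1,n}$ as the length of the run of ``positive'' intervals immediately after $I_i^{m_{i,n}}$. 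This self-referential bootstrapping is precisely what lets the decoder skip the finitely many bad $j$'s for every $i$, and it has no counterpart in your one-slab-per-bit scheme.
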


\begin{proof}
Let $G$ be the generic filter and $g=\bigcup G$ is a total function $g:\omega\rightarrow 2$. Fix a maximal antichain $M\subseteq \mathbb{V}_{\geq\varepsilon}^-$ such that $d^-(\omega\setminus\dom(f))= \varepsilon$ for all $f\in M$. Suppose $M\cap G=\{f_0\}$. Now that $d^-(\omega\setminus\dom(f_0))= \varepsilon$, we take the sequence $\{N_i\}_{i\in\omega}$ satisfying Lemma \ref{interval}. Also fix a bijection $\langle\cdot,\,\cdot\rangle:\mathbb{N}\times\mathbb{N}\rightarrow\mathbb{N}$. Let $I_i^j$ be the interval $[N_{\lr{i,j}},N_{\lr{i,j}+1})$.

We'd like to design a coding $F$ with the help of $g$, such that $\{F(n)\}_{n\in\omega}$ enumerates $(2^\omega)^V$. For each interval $I_i^j$, we list the first $\lfloor\dfrac{\varepsilon\cdot|I_i^j|}{2^{i+1}}\rfloor$ elements of $I_i^j\setminus \dom(f_0)$, and check the value of $g$ at these slots. We say that $I_i^j$ is positive, if there are more $1$'s {in $g$} than $0$'s among the slots we check; otherwise it's negative. {Notice that, since $d^-(\omega\setminus \dom (f_0))=\varepsilon$, there will be enough elements in $I_i^j\setminus \dom(f_0)$ for $k=\lr{i,j}$ large enough.}

Fix $n\in\mathbb{N}$. {We construct $F(n)$ together with a sequence $\{m_{i,n}\}_{i\in\omega}$}. The idea is to use an interval $I_i^{m_{i,n}}$ to code $F(n)(i)$, then several intervals afterwards to code the position of the next interval $I_{i+1}^{m_{i+1,n}}$. Start with $m_{0,n}=n$. Suppose $m_{i,n}$ is constructed. We let $F(n)(i)=1$ if $I_i^{m_{i,n}}$ is positive, and $F(n)(i)=0$ if $I_i^{m_{i,n}}$ is negative. Then let $m_{i+1,n}$ be the largest number $j\geq 0$ such that $I_i^{m_{i,n}+1},\dots,I_i^{m_{i,n}+j}$ are all positive ($j=0$ if the first interval $I_i^{m_{i,n}+1}$ is already negative). Such finite $j$ exists by the usual genericity argument: {for any $f\leq f_0$ and fixed $i$, by the above lemma, $\lim\limits_{j\rightarrow\infty}\dfrac{|(\dom(f)\setminus\dom(f_0))\cap I_i^j|}{|I_i^j|}=0$ and hence $f$ only decides whether $I_i^j$ is positive/negative for finitely many $j$'s. Then we can extend $f'\leq f$ to ensure at least one of the intervals $I_i^{m_{i,n}+j}$ is negative, and therefore $j$ is finite.}

The above definition gives us a map $F:\omega\rightarrow 2^\omega$ in $V[G]$. Let $\dot F$ be its name in $V$, and $\dot{m}_{i,n}$ be a name for each $m_{i,n}$. For every $A\in (2^\omega)^V$, define the set
$D_A=\{f\in\mathbb{V}^-_{\geq\varepsilon}: \exists n\ f\Vdash \dot F(n)=A\}$. It suffices to prove $D_A$ is dense below $f_0$.

{Fix $f\leq f_0$ and $A\in (2^\omega)^V$}. By Lemma \ref{interval}, for each $i\in\mathbb{N}$ we can find $n_i$ large enough such that
\[|(\dom(f)\setminus\dom(f_0))\cap I_i^j|< \dfrac{\varepsilon\cdot|I_i^j|}{2^{i+3}}\]
for all $j\geq n_i$. Now we take the extension $f'\leq f$ such that
{{\footnotesize 
\[f'(k)=\begin{cases}
    1 & k\notin \dom(f)\text{ and $k$ is among the first $\lfloor\dfrac{\varepsilon\cdot|I_i^{n_i}|}{2^{i+1}}\rfloor$ elements of $I_i^{n_i}\setminus \dom(f_0)$ for }A(i)=1 \\
    0 & k\notin \dom(f)\text{ and $k$ is among the first $\lfloor\dfrac{\varepsilon\cdot|I_i^{n_i}|}{2^{i+1}}\rfloor$ elements of $I_i^{n_i}\setminus \dom(f_0)$ for }A(i)=0 \\
    1 & k\notin \dom(f)\text{ and $k$ is among the first $\lfloor\dfrac{\varepsilon\cdot|I_i^{n_i+j}|}{2^{i+1}}\rfloor$ elements of $I_i^{n_i+j}\setminus \dom(f_0)$ for }1\leq j\leq n_{i+1}\\
    0 & k\notin \dom(f)\text{ and $k$ is among the first $\lfloor\dfrac{\varepsilon\cdot|I_i^{n_i+n_{i+1}+1}|}{2^{i+1}}\rfloor$ elements of $I_i^{n_i+n_{i+1}+1}\setminus \dom(f_0)$} 
\end{cases}\]
}and $f'(k)$ is not defined except for $k\in\dom(f)$ or when $k$ satisfies one of the above four conditions.} Notice that for each $i$, we extended the domain on the intervals $I_i^j$ for only finitely many $j$'s, and in each $I_i^j$ the extended part has density at most $\dfrac{\varepsilon}{2^{i+1}}$. Thus $d(\dom(f')\setminus\dom(f))=0$. So $f'$ is a condition.

Now we claim $f'\Vdash \dot F(n_0)=A$. {This is because $f'$ already has enough information to decide whether each relevant interval is positive/negative, since $(\dom(f')\setminus\dom(f))\cap I_i^j$ has occupied at least ${3}/{4}$ of the slots that we check when deciding whether $I_i^j$ is positive. Now by definition we have $m_{0,n_0}=n_0$, and the intervals $I_i^{n_i+1},\dots,I_i^{n_i+n_{i+1}+1}$ indicate $m_{i+1,n_0}=n_{i+1}$. Therefore by induction $f'\Vdash \dot{m}_{i,n_0}=n_i$ for all $i$. Finally, the intervals $I_i^{n_i}$ ensure $F(n_0)(i)=A(i)$. Therefore $f'\Vdash \dot{F}(n_0)(i)=A(i)$}. We conclude that $D_A$ is dense below $f_0$, and thus $F$ enumerates $(2^\omega)^V$.
\end{proof}

~

The last density forcing we investigate is $\mathbb{V}^+_{>0}$. Since $d^+(A)>0$ is equivalent to saying $A$ is not a density zero set, this forcing naturally splits into a two-step iteration. 
\begin{lemma}
$\mathbb{V}^+_{>0}$ is forcing equivalent with the two-step iteration $\mathcal{P}(\omega)/\mathcal{Z}_0*\mathbb{V}_{ground}(\dot{\mathcal{F}})$, where $\mathcal{Z}_0=\{A\subset\omega:\,d(A)=0\}$, $\dot{\mathcal{F}}$ is the name for $\mathcal{P}(\omega)/\mathcal{Z}_0$-generic filter, and $\mathbb{V}_{ground}(\dot{\mathcal{F}})$ consists of partial functions $f:\omega\rightarrow 2$ in the ground model such that $\omega\setminus\dom(f)\in \dot{\mathcal{F}}$.
\end{lemma}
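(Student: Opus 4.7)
The plan is to construct an explicit dense embedding
$$\pi\colon \mathbb{V}^+_{>0}\longrightarrow \mathcal{P}(\omega)/\mathcal{Z}_0 * \mathbb{V}_{ground}(\dot{\mathcal{F}}), \qquad \pi(f) = \bigl([\omega\setminus\dom(f)]_{\mathcal{Z}_0},\,\check{f}\bigr).$$
First I would verify this is well-defined: $d^+(\omega\setminus\dom(f))>0$ means $\omega\setminus\dom(f)\notin \mathcal{Z}_0$, so the first coordinate is a legitimate condition of the quotient; and since $\omega\setminus\dom(f)$ is disjoint from $\dom(f)$, the first coordinate forces $\omega\setminus\dom(\check{f})\in\dot{\mathcal{F}}$, so $\check{f}$ genuinely names a condition of $\mathbb{V}_{ground}(\dot{\mathcal{F}})$.

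Next I would check order-preservation and preservation of incompatibility. For order: if $f_1\leq f_2$ in $\mathbb{V}^+_{>0}$ then $f_1\supseteq f_2$, giving both $\omega\setminus\dom(f_1)\subseteq\omega\setminus\dom(f_2)$ and $\check{f_1}\supseteq\check{f_2}$. For incompatibility: if $f_1$ and $f_2$ are incompatible in $\mathbb{V}^+_{>0}$, either they disagree on some $n\in\dom(f_1)\cap\dom(f_2)$ — in which case any common refinement of $\pi(f_1),\pi(f_2)$ would force $\check{f_1}\cup\check{f_2}$ to be a partial function, contradiction — or $f_1\cup f_2$ is a function but $\omega\setminus(\dom(f_1)\cup\dom(f_2))\in\mathcal{Z}_0$, which makes $(\omega\setminus\dom(f_1))\cap(\omega\setminus\dom(f_2))\in\mathcal{Z}_0$, so the first coordinates are incompatible in the quotient.

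The bulk of the work goes into denseness of the image. Given $(A,\dot{g})$ in the iteration, I would first pass to a stronger condition $(A',\check{g})$ with $g\in V$, using the standard fact that when the second factor of an iteration has only ground-model conditions, the check-name conditions are dense. From $(A',\check{g})$ being a condition we obtain $A'\cap\dom(g)\in\mathcal{Z}_0$, so I may replace $A'$ by the $\mathcal{Z}_0$-equivalent set $A'':=A'\setminus\dom(g)$. I then build $h\in\mathbb{V}^+_{>0}$ by extending $g$ with $h(n):=0$ for every $n\in\omega\setminus(A''\cup\dom(g))$; this yields $\omega\setminus\dom(h)=A''$, which has positive upper density, and $\pi(h)=([A'']_{\mathcal{Z}_0},\check{h})$ refines $(A,\dot{g})$ since $A''\subseteq A'$ and $h\supseteq g$.

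I expect the main obstacle to be the first incompatibility case — one must argue cleanly that no pair $(B,\dot{h})$ in the iteration can simultaneously force $\dot{h}\supseteq\check{f_1}$ and $\dot{h}\supseteq\check{f_2}$ when $f_1,f_2$ disagree as functions — together with keeping the $\mathcal{Z}_0$-bookkeeping tidy during the density argument (i.e., verifying that the coordinate replacements $A\leadsto A'\leadsto A''$ do not alter the $\mathcal{Z}_0$-equivalence class). Once the three properties (order-preservation, incompatibility-preservation, and density of image) are established, the claimed forcing equivalence follows from the standard fact that a dense embedding between forcing notions induces equivalent forcings.
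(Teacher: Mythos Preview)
Your proposal is correct and follows essentially the same approach as the paper: both define the map $f\mapsto\bigl([\omega\setminus\dom(f)],\check f\bigr)$ and verify it is a dense embedding by checking order-preservation, incompatibility-preservation via the same two cases, and density of the image by filling in $0$'s outside the designated set. Your treatment is slightly more careful than the paper's in two places---you explicitly justify well-definedness, and in the density step you first reduce an arbitrary name $\dot g$ to a check name before constructing the extension---whereas the paper tacitly treats $\dot f$ as a ground-model object from the start; but these are expository refinements rather than a different argument.
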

\begin{proof}
Define the map
\begin{align*}
    j:\,\mathbb{V}^+_{>0} &\rightarrow \mathcal{P}(\omega)/\mathcal{Z}_0*\mathbb{V}_{ground}(\dot{\mathcal{F}})\\
    f &\mapsto \lr{[\omega\setminus\dom(f)],{\check{f}}}
\end{align*}
It's clear that $f\leq g$ implies $j(f)\leq j(g)$. Suppose $f\perp g$. Then either $f(n)\neq g(n)$ for some $n$, or $d(\omega\setminus(\dom(f)\cup\dom(g)))=0$. In both cases $j(f)\perp j(g)$. To show that $j$ is a dense embedding, fix $\lr{[A],{\dot{f}}}\in \mathcal{P}(\omega)/\mathcal{Z}_0*\mathbb{V}_{ground}(\dot{\mathcal{F}})$. So $[A]\Vdash \omega\setminus\dom(\dot{f})\in \dot{\mathcal{F}}$. Hence $A \subseteq \omega\setminus\dom(f)$ modulo a density zero set. Let $f'=f\cup \{(n,0):\,n\notin A\land n\notin \dom(f)\}$. Then $j(f')=\lr{[A],\dot{f}'}\leq \lr{[A],\dot{f}}$.
\end{proof}

This intermediate model $V[\mathcal{F}]$ will help us show that $\mathbb{V}^+_{>0}$ collapses {the} continuum.
\begin{theorem}
    Forcing with $\mathbb{V}^+_{>0}$ collapses {the} continuum to $\omega$.
\end{theorem}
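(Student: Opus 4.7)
The plan is to exploit the two-step decomposition $\mathbb{V}^+_{>0}\cong \mathcal{P}(\omega)/\mathcal{Z}_0 * \mathbb{V}_{ground}(\dot{\mathcal{F}})$ from the preceding lemma. Since $\mathcal{P}(\omega)/\mathcal{Z}_0$ preserves cardinals (by Farah's decomposition into a $\sigma$-closed forcing followed by a measure algebra, cited in the introduction), it suffices to work in $V[\mathcal{F}]$ and show that $\mathbb{V}_{ground}(\mathcal{F})$ collapses $(2^\omega)^V$ to $\omega$. Because conditions are ground-model partial functions, $|\mathbb{V}_{ground}(\mathcal{F})|\le \mathfrak{c}^V$ in $V[\mathcal{F}]$, and the generic $g:\omega\to 2$ must be arranged to enumerate all of $(2^\omega)^V$.

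The central flexibility is the following: if $f\in\mathbb{V}_{ground}(\mathcal{F})$, $E\in V$ is a density-zero subset of $\omega\setminus\dom(f)$, and $h\in V$ is any function $E\to 2$, then $f\cup h$ remains a condition, since $\omega\setminus\dom(f\cup h)$ differs from $\omega\setminus\dom(f)$ only on $E\in\mathcal{Z}_0$, so the two represent the same element of $\mathcal{P}(\omega)/\mathcal{Z}_0$ and both lie in $\mathcal{F}$. I would then fix in $V$ a pairwise disjoint family $\{E_{n,m}:n,m\in\omega\}$ of infinite density-zero sets, each enumerated as $E_{n,m}=\{e_{n,m,k}\}_{k\in\omega}$, and define $F:\omega\to 2^\omega$ in $V[G]$ by $F(n)(k)=g(e_{n,M(n),k+\ell})$, where $M(n)$ is the least $m$ such that $g$ realizes a prescribed finite signature on the first $\ell$ slots of $E_{n,m}$. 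Given $r\in(2^\omega)^V$ and a condition $f$, I would find $m_0$ large enough that the signature slots of $E_{n,m}$ are not completely fixed by $f$ for any $m\le m_0$, then extend $f$ by (i) planting an anti-signature on $E_{n,m}$ for $m<m_0$, (ii) planting the signature on $E_{n,m_0}$, and (iii) coding $r$ on the post-signature slots of $E_{n,m_0}$. Each of these is an extension on a density-zero $V$-set, so the outcome is a condition $f'\le f$ forcing $\dot{F}(n)=r$.

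The main obstacle is to ensure that the required $m_0$ always exists. A pathological ground-model $f$ could in principle fix the signature on infinitely many $E_{n,m}$'s, freezing $M(n)$ against our intended extension. The resolution requires coordinating the combinatorial arrangement of the $E_{n,m}$'s with the genericity of $\mathcal{F}$, perhaps by choosing the signature slots to form an $\mathcal{F}$-positive set so that any condition $f$ must leave infinitely many of them free; verifying that this adaptive construction is expressible via a $V$-name for $\dot{F}$ is the most technical step and the crux of the argument.
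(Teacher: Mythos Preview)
Your overall framework---pass to the two-step iteration and code ground-model reals into the generic $g$ along designated index sets---matches the paper. The gap is precisely the obstacle you flag in your final paragraph, and your sketch does not close it. With the $E_{n,m}$ fixed in $V$ as density-zero sets, nothing stops a condition $f\in\mathbb{V}_{ground}(\mathcal{F})$ from satisfying $\bigcup_m E_{n,m}\subseteq\dom(f)$ (for one $n$ or for all $n$), since deleting a density-zero set from $\omega\setminus\dom(f)$ leaves it in $\mathcal{F}$; then $M(n)$ is undefined and no $m_0$ exists. Your proposed remedy, making the signature slots ``$\mathcal{F}$-positive'', cannot be arranged in $V$: $\mathcal{F}$ is generic, no nontrivial ground-model set is forced into $\mathcal{F}$, and a $V$-family cannot anticipate which sets $\mathcal{F}$ will contain. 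Calling this ``the crux'' is correct, but it is the whole difficulty, and it is not resolved.

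The paper's two missing ingredients are these. First, the coding sets are chosen in $V[\mathcal{F}]$, not in $V$: one repeatedly halves the current remainder and takes $A_{n+1}$ to be the half \emph{not} in $\mathcal{F}$ (using that $\mathcal{F}$ is an ultrafilter on ground-model sets). Each $A_n$ is then a ground-model set with $A_n\notin\mathcal{F}$ and $d(A_n)=2^{-n}$, and a short density estimate shows that every $F\in\mathcal{F}$ meets some $A_n$ in an infinite set---exactly the ``for every condition $f$, some $n$ leaves room'' statement you needed. Second, in place of a signature mechanism, the paper builds in $V$ (by a bookkeeping induction of length $\mathfrak{c}$) a \emph{sufficiently surjective} $\sigma:2^\omega\to 2^\omega$: for every partial $h$ with coinfinite domain and every target $x$ there is a total $y\supseteq h$ with $\sigma(y)=x$. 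The enumeration is $\eta(n)=\sigma(g\circ\iota_n)$, where $\iota_n$ enumerates $A_n$. Given $f$ and $x$, pick $n$ with $A_n\setminus\dom(f)$ infinite and extend $f$ on $A_n$ to hit the right $y$; this is a legal extension because $A_n\notin\mathcal{F}$. Letting $\mathcal{F}$ steer the choice of coding sets is what your argument is missing.
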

\begin{proof}
Let $\mathcal{F}$ be the generic filter in $\mathcal{P}(\omega)/\mathcal{Z}_0$. In $V[\mathcal{F}]$, we inductively construct a partition $\mathbb{N}=\bigcup_{n\in \omega}A_n$ satisfying the following:
\begin{itemize}
    \item Each $A_n$ is in the ground model and {$\omega\setminus \bigcup\limits_{i=0}^n A_i\in \mathcal{F}$ for all $n$}.
    \item For each $F\in\mathcal{F}$, there is $n$ such that $A_n\cap F$ is infinite.
\end{itemize}

{Start with $A_0=\emptyset$. Clearly $\omega\setminus A_0\in\mathcal{F}$.} Suppose $A_i$ is constructed for $i\leq n$. Enumerate the elements of {$\omega\setminus \bigcup\limits_{i=0}^n A_i$} as $b_n^0<b_n^1<b_n^2<\dots$ and consider the disjoint union $\omega\setminus \bigcup\limits_{i=0}^n A_i=B_n^1\cup B_n^2$ where $B_{n}^1=\{b_n^i:\, i\text{ is odd}\}$ and $B_n^2=\{b_n^i:\, i\text{ is even}\}$. {Both $B_n^1$ and $B_n^2$ are in the ground model, since by induction hypothesis $A_i$ is in the ground model for $i\leq n$. Notice that $\mathcal{F}$ is an ultrafilter over the ground model, and by induction hypothesis $\omega\setminus \bigcup\limits_{i=0}^n A_i\in \mathcal{F}$; therefore exactly one of $B_n^1,B_n^2$ is in $\mathcal{F}$.} Let $A_{n+1}$ be the one \emph{not} in $\mathcal{F}$. Then $\omega\setminus \bigcup\limits_{i=0}^{n+1} A_i\in \mathcal{F}$.

It's clear that $\mathbb{N}=\bigcup_{n\in \omega}A_n$ and {$d(A_n)=\dfrac{1}{2^{n}}$ for $n\geq 1$}. Now suppose $A_n\cap F$ is finite for all $n$. Then \[d^+(F)=d^+(\bigcup_{i\in\omega} (A_i\cap F))=d^+(\bigcup_{i\geq n} (A_i\cap F))\leq d^+(\bigcup_{i\geq n} A_i)= \dfrac{1}{2^{n-1}}\] for all $n\geq 1$. Hence $d(F)=0$ and thus $F\notin\mathcal{F}$. Finally, let $a_n^0<a_n^1<a_n^2<\dots$ enumerate each $A_n$, and define $\iota_n(i)=a_n^i$. Each $\iota_n$ is also in the ground model.

In $V$, we construct a sufficiently surjective function $\sigma:\, 2^\omega\rightarrow 2^\omega$. That is, for every $x\in 2^\omega$ and for every partial function $f:\,\omega\rightarrow 2$ such that $\omega\setminus\dom(f)$ is infinite, there is $y\in 2^\omega$ such that $f\subset y$ and $\sigma(y)=x$. This is done by transfinite induction: let $\{\lr{x_\alpha,f_\alpha}:\,\alpha<2^{\aleph_0}\}$ enumerate all such pairs of $x$ and $f$. At each stage, we choose $y_\alpha\supset f_\alpha$ which is not in the domain of $\sigma$ yet and let $\sigma(y_\alpha)=x_\alpha$. This is doable because $|\omega\setminus\dom(f_\alpha)|=\aleph_0$, and at the current stage, $\sigma$ is only defined at $<2^{\aleph_0}$ many points. Then $\sigma$ is obtained by taking union at limit stages.

Let $G$ be the generic filter in $\mathbb{V}_{ground}(\mathcal{F})$ and $g=\bigcup G$ be the generic real. In $V[\mathcal{F}][G]$, define
\begin{align*}
    \eta:\omega &\rightarrow (2^\omega)^{V}\\
    n &\mapsto \sigma(g\circ \iota_n)
\end{align*}
To see that this is well-defined, \textit{i.e.} $g\circ \iota_n\in (2^\omega)^V$, notice that each $A_n$ is in the ground model and $\omega\setminus A_n\in \mathcal{F}$. Hence, given any $f\in\mathbb{V}_{ground}(\mathcal{F})$ we can extend it to $f'\leq f$ such that $A_n\subseteq \dom(f')$. Therefore $f'\Vdash g\circ \iota_n\in (2^\omega)^V$.

Now we claim that $\eta$ is surjective. Fix $x\in(2^\omega)^{V}$ and take an arbitrary $f\in \mathbb{V}_{ground}(\mathcal{F})$. Then $A_n\cap(\omega\setminus\dom(f))$ is infinite for some $n$, or equivalently, $\omega\setminus \dom(f\circ \iota_n)$ is infinite. So there is $y\supset f\circ \iota_n$ such that $\sigma(y)=x$. Now let $f'=f\cup (y\circ \iota_n^{-1})$. Then $\dom(f')=\dom(f)\cup A_n$ and therefore it is a condition, and it follows from the definition of $\eta$ that $f'\Vdash \dot{\eta}(n)=x$ .
\end{proof}

~

To conclude, we've completed the following table from \cite{LMS-R}. The shaded boxes are the results proven in this paper. Note that if a forcing notion collapses {the} continuum to $\omega$, then it fails to be proper and also adds both dominating and Cohen reals.

\begin{center}
\begin{tabular}{|c|c|c|c|c|c|c|c|c|}
\hline
~ & $\mathbb{M}^-_{>0}$ & $\mathbb{M}^+_{>0}$ & $\mathbb{M}^-_{\geq\epsilon}$ & $\mathbb{M}^+_{\geq\epsilon}$ & $\mathbb{V}^-_{>0}$ & $\mathbb{V}^+_{>0}$ & $\mathbb{V}^-_{\geq\epsilon}$ & $\mathbb{V}^+_{\geq\epsilon}$ \\
\hline
proper & \cellcolor{lightgray}\xmark & \cmark & \cellcolor{lightgray}\cmark & \cmark & \xmark & \cellcolor{lightgray}\xmark & \cellcolor{lightgray}\xmark & \cmark\\
\hline
dominating & \cellcolor{lightgray}\cmark & \cmark & \cellcolor{lightgray}\cmark & \cmark & \cmark & \cellcolor{lightgray}\cmark & \cellcolor{lightgray}\cmark & \xmark\\
\hline
Cohen & \cmark & \cmark & \cmark & \cmark & \cmark & \cellcolor{lightgray}\cmark & \cellcolor{lightgray}\cmark & \xmark\\
\hline
\end{tabular}
\end{center}

\begin{question}
    How about the corresponding forcings with subscript $>\varepsilon$ for $\varepsilon\neq 0$? Are they proper? If so, do they add dominating and Cohen reals?
\end{question}

~

\noindent\textbf{Acknowledgements.} Thanks to Dilip Raghavan for advice on almost everything in this paper. Thanks to Zhang Zhentao for early discussion on properties of upper and lower density. Thanks to Will Johnson for a counterexample on upper density which is not included in this paper; it saved the author from trying to prove a lemma that turns out to be incorrect.

\end{spacing}
\nocite{*}
\bibliography{references}
\bibliographystyle{plain}

\end{document}